\documentclass[12pt]{amsart}
\textwidth=6in
\textheight=8.0in
\oddsidemargin=0.0in
\evensidemargin=0.0in
\parindent=0in
\parskip=6pt
\usepackage{amsmath,amssymb,amsthm,amscd}
\newtheorem{thm}{Theorem}

\newtheorem{conjec}[thm]{Conjecture}
\newtheorem{lem}[thm]{Lemma}

\newtheorem*{rem}{Remark}

\newcommand{\F}{\mathbb{F}}
\newcommand{\Q}{\mathbb{Q}}
\newcommand{\Z}{\mathbb{Z}}
\newcommand{\G}{\mathbb{G}}
\newcommand{\C}{\mathbb{C}}

\newcommand{\Gal}{{\rm Gal}}
\newcommand{\legen}[2]{\genfrac{(}{)}{}{}{#1}{#2}}
\newcommand{\artin}[2]{\genfrac{(}{)}{}{}{#1}{#2}}
\newcommand{\ord}{{\rm ord}}
\newcommand{\abs}[1]{\left\lvert#1\right\rvert}
\begin{document}

\title{Divisibility properties of the Fibonacci entry point}
\author{Paul Cubre}
\address{Department of Mathematics, Penn State University, State College, PA 16802}
\email{pcubre@gmail.com}
\author{Jeremy Rouse}
\address{Department of Mathematics, Wake Forest University, Winston-Salem, NC 27109}
\email{rouseja@wfu.edu}
\subjclass[2010]{Primary 11B39; Secondary 11R32, 14G25}
\thanks{The first author was partially supported by the Wake Forest University
Graduate School. The second author was supported by NSF grant DMS-0901090}
\begin{abstract}
For a prime $p$, let $Z(p)$ be the smallest positive integer $n$ so that
$p$ divides $F_{n}$, the $n$th term in the Fibonacci sequence. Paul Bruckman
and Peter Anderson conjectured a formula for $\zeta(m)$, the density of primes
$p$ for which $m | Z(p)$ on the basis of numerical evidence. We prove
Bruckman and Anderson's conjecture by studying the algebraic group
$G : x^{2} - 5y^{2} = 1$ and relating $Z(p)$ to the order of $\alpha =
(3/2,1/2) \in G(\F_{p})$. We are then able to use Galois theory and the
Chebotarev density theorem to compute $\zeta(m)$. 
\end{abstract}

\maketitle

\section{Introduction and Statement of Results}

Let $F_{n}$ denote the Fibonacci sequence defined as usual by $F_{0} =
0$, $F_{1} = 1$ and $F_{n} = F_{n-1} + F_{n-2}$ for $n \geq 2$. If $p$
is a prime number, the smallest positive integer $m$ for which $p |
F_{m}$ is called the Fibonacci entry of $p$, $Z(p)$. For example, we
have $Z(11) = 10$ since $11 | F_{10} = 55$ is the smallest Fibonacci
number that is a multiple of $11$.

It is well-known that for every prime $p$, $Z(p) \leq p+1$ (in fact, a
proof of this follows from Lemma~\ref{grouporder} and
Lemma~\ref{Zplem} in Section~\ref{modp}). In 1913, Carmichael (see
\cite{Carmichael}, Theorem XXI) proved that if $m \ne 1, 2, 6, 12$,
then there is a prime number $p$ so that $Z(p) = m$. It is not presently
known if there are infinitely many primes $p$ for which $Z(p) =
p+1$. 

The main question we study is, given a positive integer $m$,
how often does $m$ divide $Z(p)$? A natural conjecture would be
that $Z(p)$ is ``random'' mod $m$ and so the answer should be $1/m$.
However, Lagarias proved in 1985 (see \cite{Lagarias} and \cite{LagariasCor}) 
that the density of primes $p$ so that $Z(p)$ is even is $2/3$. More precisely,
\[
  \lim_{x \to \infty}
  \frac{\# \{ p \leq x : p \text{ is prime and } Z(p) \text{ is even} \}}{\pi(x)}  = \frac{2}{3},
\]
where $\pi(x)$ is the number of primes $\leq x$. Motivated by this work,
Bruckman and Anderson in \cite{BA} gathered numerical data and conjectured
a formula for
\[
  \zeta(m) := \lim_{x \to \infty} M(m,x) / \pi(x),
\]
where $M(m,x) = \# \{ p \leq x : p \text{ is prime and } m | Z(p) \}$. Their
conjecture is the following. 

\begin{conjec}[Conjecture 3.1 of \cite{BA}]
\label{BAconj}
If $m = q^{e}$ is a prime power (with $e \geq 1$), then
\[
  \zeta(q^{e}) = \frac{q^{2-e}}{q^{2} - 1}.
\]
For an arbitrary positive integer $m$, we have
\[
  \zeta(m) = \rho(m) \prod_{q^{j} \| m} \zeta(q^{j}).
\]
where the product is over all prime powers occurring in the
prime factorization of $m$, and
\[
  \rho(m) = \begin{cases} 
    1 & \text{ if } 10 \nmid m\\
    \frac{5}{4} & \text{ if } m \equiv 10 \pmod{20}\\
    \frac{1}{2} & \text{ if } 20 | m.
\end{cases}
\]
\end{conjec}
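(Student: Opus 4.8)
The plan is to recast the condition $m \mid Z(p)$ as a divisibility condition on the order of $\alpha = (3/2,1/2)$ in $G(\F_p)$, and then to evaluate the density of that condition by applying the Chebotarev density theorem in a family of Kummer-type number fields attached to the torsion and division points of $G$.

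First I would use Lemma~\ref{grouporder}, that $G(\F_p)$ is cyclic of order $p - \legen{5}{p}$ for $p \nmid 10$, together with Lemma~\ref{Zplem}, which relates $Z(p)$ to $e_p := \ord(\alpha)$ (via the fact that $\alpha^n$ is the point $(L_{2n}/2, F_{2n}/2)$, where $L_n$ is the Lucas sequence; the precise relation depends on $v_2(e_p)$ and on $\legen{5}{p}$). Discarding the finitely many $p \mid 10$, the question reduces to: for a prime power $\ell^a$, with what density does $\ell^a \mid e_p$ (or the slight variant of this coming from Lemma~\ref{Zplem} when $\ell \in \{2,5\}$) hold? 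The key elementary observation, valid in any cyclic group of order $N$, is that, writing $s = v_\ell(N)$, one has $\ell^a \mid \ord(\alpha)$ if and only if $s \ge a$ and $\alpha$ is \emph{not} an $\ell^{\,s-a+1}$-th power in the group. This suggests stratifying the primes $p$ by the value of $s := v_\ell\!\left(p - \legen{5}{p}\right)$.

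Within the stratum fixed by a value of $s$, both conditions become splitting conditions. The condition $\ell^{s} \,\|\, p - \legen{5}{p}$ is detected by the fields $\Q(G[\ell^{s}])$ and $\Q(G[\ell^{s+1}])$; since $G$ becomes isomorphic to $\G_{m}$ over $\Q(\sqrt 5)$ via $(x,y) \mapsto x + \sqrt 5\, y$, the Galois module $G[\ell^{s}]$ is $\mu_{\ell^{s}}$ twisted by the quadratic character of $\Q(\sqrt 5)/\Q$, so $\Q(G[\ell^{s}])$ is an explicit subfield of $\Q(\zeta_{\ell^{s}},\sqrt 5)$ whose degree and ramification are easily determined. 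The condition ``$\alpha$ is an $\ell^{t}$-th power in $G(\F_p)$, with $G[\ell^{t}]\subseteq G(\F_p)$'' holds precisely when $\mathrm{Frob}_{p}$ splits completely in the division field $K_{\ell^{t}} := \Q\!\left(G[\ell^{t}],\, \tfrac{1}{\ell^{t}}\alpha\right)$, where $\tfrac{1}{\ell^{t}}\alpha$ is a point $\beta \in G(\overline{\Q})$ with $\ell^{t}\beta = \alpha$ (so $\beta$ corresponds to $\phi^{\,2/\ell^{t}}$ with $\phi = (1+\sqrt 5)/2$, and in particular a square root of $\alpha$ in $G$ corresponds to $\phi$ itself). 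The Chebotarev density theorem, applied in the composita $K_{\ell^{t}} \cdot \Q(G[\ell^{s+1}])$, evaluates the density of each stratum, and summing over $s \ge a$ gives $\zeta(\ell^{a}) = \ell^{2-a}/(\ell^{2}-1)$. For distinct primes the relevant fields turn out to be independent enough that these densities multiply, producing $\prod_{q^{j}\|m}\zeta(q^{j})$.

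The main obstacle, and the source of the correction $\rho(m)$, is the precise determination of the degrees $[K_{\ell^{t}}:\Q]$ and of the degrees of the various composita, i.e., the entanglement among the cyclotomic fields, $\Q(\sqrt 5)$, and the division fields of $G$. The crucial collision is between the primes $2$ and $5$: a square root of $\alpha$ in $G$ generates exactly $\Q(\sqrt 5)$ (so $K_{2} = \Q(\sqrt 5)$), while $\Q(G[4]) = \Q(\sqrt{-5})$ and $\Q(G[5^{b}]) = \Q(\zeta_{5^{b}}) \supseteq \Q(\sqrt 5)$; moreover $5 \mid p - \legen{5}{p}$ already forces $\legen{5}{p} = 1$, hence forces $\alpha$ to be a square in $G(\F_p)$. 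Thus when both $2 \mid m$ and $5 \mid m$ the $2$-adic and $5$-adic conditions are linked, and $[K_{2^{a}}\cdot K_{5^{b}}:\Q]$ is strictly smaller than the product of the separate degrees. A careful case analysis of the Galois action on the torsion and division points of $G$ — distinguishing $v_{2}(m) = 1$ from $v_{2}(m) \ge 2$, since $\Q(G[2]) = \Q$ whereas $\Q(G[4])$ is a genuine quadratic field — quantifies this loss, and reassembling the Chebotarev computation with the corrected degrees produces the three-case function $\rho(m)$, hence the full formula of Bruckman and Anderson.
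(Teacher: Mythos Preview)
Your proposal is correct and follows essentially the same route as the paper: reduce $m\mid Z(p)$ to a divisibility condition on the order of $\alpha$ in the cyclic group $G(\F_p)$, translate that via Lemma~\ref{orderpreimage} into the nonexistence of suitable $\ell$-power preimages of $\alpha$, detect the latter by Chebotarev in the division fields $K_{\ell^k}$, and trace the correction $\rho(m)$ to the entanglement $K_{2^{a}}\cap K_{5^{b}}=\Q(\sqrt{5})$. The only presentational difference is that the paper packages the two simultaneous conditions (on $v_\ell(|G(\F_p)|)$ and on preimages) via a single injective representation $\rho:\Gal(K_{\ell^k}/\Q)\hookrightarrow I(\ell^k)$ into the affine group, counting inside the sets $\mathcal{D}_{k,t,\ell}$, whereas you stratify by $s=v_\ell(p-\legen{5}{p})$ first and then apply Chebotarev in each stratum; these are equivalent bookkeeping choices.
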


The main result of the current paper is a proof of this conjecture.

\begin{thm}
\label{main}
Conjecture~\ref{BAconj} is true for every positive integer $m$.
\end{thm}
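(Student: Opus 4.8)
The plan is to translate the Fibonacci entry point into a statement about orders in $G(\F_p)$, where $G : x^2 - 5y^2 = 1$ is a one-dimensional torus. Writing $\phi = \frac{1+\sqrt 5}{2}$ and $\psi = \frac{1-\sqrt 5}{2}$ and identifying the point $(x,y)$ with $x + y\sqrt 5$, so that the group law on $G$ is multiplication of norm-one elements of $\Q(\sqrt 5)$, the point $\alpha = (3/2,1/2)$ equals $\phi^2$. From $F_n = (\phi^n - \psi^n)/\sqrt 5$ and $\phi\psi = -1$ one gets $p \mid F_n \iff (\phi/\psi)^n = 1 \iff (-\alpha)^n = 1$ in $G(\F_p)$, so $Z(p) = \ord_{G(\F_p)}(-\alpha)$; together with $\abs{G(\F_p)} = p - \legen{5}{p}$ and the resulting bound $Z(p) \le p+1$, this is what I expect Lemma~\ref{grouporder} and Lemma~\ref{Zplem} to provide. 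Since $G(\F_p)$ is cyclic and, for $p > 2$, $(-1,0)$ is its unique element of order $2$, the odd part of $Z(p)$ equals the odd part of $\ord(\alpha)$, while $v_2(Z(p))$ is an explicit function of $v_2(\ord\alpha)$: it equals $v_2(\ord\alpha)$ when $v_2(\ord\alpha) \ge 2$, equals $1$ when $v_2(\ord\alpha) = 0$, and equals $0$ when $v_2(\ord\alpha) = 1$. So it is enough to compute, for each prime power $q^e$, the density of primes $p$ with $q^e \mid \ord_{G(\F_p)}(\alpha)$, and then to transfer back through this dictionary.

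For a fixed prime power $q^e$, set $N_p = \abs{G(\F_p)} = p - \legen{5}{p}$ and $a = v_q(N_p)$. In the cyclic group $G(\F_p)$ one has, for $0 \le j \le a$, that $\alpha \in G(\F_p)^{q^j}$ precisely when $v_q(\ord\alpha) \le a - j$; hence $q^e \mid \ord\alpha$ if and only if $a \ge e$ and $\alpha \notin G(\F_p)^{q^{a-e+1}}$. The event ``$q^a \| N_p$ and $\alpha \in G(\F_p)^{q^j}$'' is a splitting condition on $p$ in the cyclotomic-Kummer field generated over $\Q$ by $\zeta_{q^{a+1}}$, $\sqrt 5$, and a $q^j$-th root of $\phi^2$ (using that $G$ splits over $\Q(\sqrt 5)$, where $G[n] \cong \mu_n$, and that $\alpha = \phi^2$ with $\phi$ a fundamental unit). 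One computes the degree and Galois group of this field directly; the relevant Kummer steps have full degree because $\phi$ is not a root of unity, the only degeneracy being at $q = 2$, $j = 1$, where $\Q(\sqrt\alpha) = \Q(\sqrt 5)$ contributes nothing new. Feeding this into the Chebotarev density theorem (together with the facts that $v_q(N_p) = a$ has density $q^{-a}$ and that the $q^j$-th power conditions have their expected densities $q^{-j}$) and summing the geometric series $\sum_{a \ge e} q^{-a}\bigl(1 - q^{-(a-e+1)}\bigr)$, one gets $\zeta(q^e) = q^{2-e}/(q^2-1)$; pushing the analogous statement for $\ord(\alpha)$ at $q = 2$ through the dictionary recovers $\zeta(2^e) = 2^{2-e}/3$, which for $e = 1$ is Lagarias's theorem.

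For general $m$, the event $m \mid Z(p)$ is the conjunction over the prime powers $q^{e} \| m$ of the corresponding divisibility of $\ord(\alpha)$, with the prime $2$ handled via its dictionary form, and one wants the joint density. The detecting fields for distinct primes are linearly disjoint over $\Q(\sqrt 5)$ by generic Kummer theory, and a case check shows that, apart from $q = 2$ and $q = 5$, the $q$-adic divisibility condition has the same conditional density in each coset of $\Gal(\Q(\sqrt 5)/\Q)$; hence those factors multiply, producing $\prod_{q^{j}\|m} \zeta(q^{j})$ up to the contribution of $\{2,5\}$. The single real interaction is between $2$ and $5$, and it is forced by the fact that $\legen{5}{p}$ is simultaneously the value of $\mathrm{Frob}_p$ on $\Q(\sqrt 5) = \Q(\sqrt\alpha)$, the criterion for $\alpha$ to be a square in $G(\F_p)$, the sign in $N_p = p - \legen{5}{p}$ that controls both $v_2(N_p)$ and $v_5(N_p)$, and a subfield of the $5$-tower $\Q(\zeta_{5^j}, \phi^{1/5^j})$. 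One therefore has to realize the Galois group of the compositum of the $2$-tower, the $5$-tower, and $\Q(\sqrt 5)$ as an explicit subgroup of the product of the individual groups and count the relevant union of Frobenius classes coset by coset over $\Gal(\Q(\sqrt 5)/\Q)$. I expect this last step to be the main obstacle: one must show that the joint ``$2$-and-$5$'' density differs from the naive product by exactly $\rho(m)$ — namely $1$, $\tfrac54$, or $\tfrac12$ according as $v_2(m)=0$, $v_2(m)=1$, or $v_2(m)\ge 2$ when $5 \mid m$ — and this is the one place where the conditional independence that governs all other primes breaks down and a hands-on computation with a non-abelian Galois group and its Frobenius classes cannot be avoided.
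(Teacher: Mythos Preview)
Your proposal is correct and follows essentially the same route as the paper: reduce $Z(p)$ to $\ord_{G(\F_p)}(\alpha)$ (your formulation $Z(p)=\ord(-\alpha)$ is equivalent to the paper's Lemma~\ref{Zplem} trichotomy), detect $q$-divisibility of the order via preimages/Kummer conditions in towers over $\Q(\zeta_{q^k},\sqrt 5)$, apply Chebotarev, and handle composite $m$ by linear disjointness away from $\{2,5\}$. The one step you flag as the main obstacle but do not carry out---the explicit joint $2$-and-$5$ Frobenius count that produces the correction factor $\rho(m)$---is exactly what the paper does in Lemma~\ref{count25}, after first packaging the Galois action uniformly as a map $\rho:\Gal(K_m/\Q)\to I(m)$ into the affine group and identifying its image as an explicit index-$2$ subgroup when $10\mid m$ (Lemma~\ref{surj}).
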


One consequence of this result is that the numbers $Z(p)$ show a bias
toward being composite. For example, if $q$ is prime, then $\zeta(q) =
\frac{q}{q^{2} - 1} \approx \frac{1}{q} + \frac{1}{q^{2}}$, and so the
numbers $Z(p)$ are divisible by $q$ more often than entries in a
random sequence.

To prove Theorem~\ref{main}, we study the algebraic group
\[
  G : x^{2} - 5y^{2} = 1.
\]
This is a twisted torus isomorphic to $\mathbb{G}_{m}$ over $\Q(\sqrt{5})$.
The group law is given by $(x_{1},y_{1}) * (x_{2},y_{2}) = 
(x_{1} x_{2} + 5 y_{1} y_{2}, x_{1} y_{2} + x_{2} y_{1})$. We consider the
point $\alpha = (3/2,1/2) \in G(\Q)$, and we show (see Lemma~\ref{multiples}) 
that
\[
  n \alpha = \overbrace{\alpha + \alpha + \cdots + \alpha}^{n~\text{times}}
  = (L_{2n}/2, F_{2n}/2).
\]
Here $L_{n}$ is the $n$th Lucas number. These are defined by $L_{0} =
2$, $L_{1} = 1$ and $L_{n} = L_{n-1} + L_{n-2}$ for $n \geq 2$. In
Lemma~\ref{Zplem}, we use this to relate the Fibonacci entry point of
$p$ to the order of $\alpha \in G(\F_{p})$.

Next, we study the density of primes $p$ for which $m$ divides the
order of $\alpha \in G(\F_{p})$. In the context of untwisted tori,
questions of this nature are quite familiar.  They were first
considered by Hasse in \cite{Hasse1} and \cite{Hasse2}, and very general
results of this type are due to Ballot \cite{Ballot} and Moree \cite{Moree}. 
Twisted tori of the type mentioned above are considered by Jones and the
second author in \cite{JonesRouse}, and in this paper criteria are
given that will guarantee that if $\alpha \in G(\Q)$ is fixed and
$\ell$ is a prime number, then the density of primes $p$ for which
$\ell$ divides the order of $\alpha \in G(\F_{p})$ exists and equals
$\frac{\ell}{\ell^{2} - 1}$.

In the present paper, we extend these results to arbitrary positive
integers $m$. To make the extension to prime powers, we consider
preimages of $\alpha \in G(\F_{p})$ under multiplication by $\ell$. We
say that $\alpha$ has an $\ell^{n}$th preimage if there is a $\beta
\in G(\F_{p})$ so that $\ell^{n} \beta = \alpha$. In
Lemma~\ref{orderpreimage}, we relate the order of $\alpha$ with the
number of preimages of $\alpha$ under the multiplication by $\ell$.
Questions of this type are common in arithmetic dynamics, and these
have connections with the Diffie-Hellman key exchange protocol (see
\cite{DiffieHellman}). In this context, a generator $\alpha \in
\F_{q}^{\times}$ is chosen and an integer $x$ that encodes a message
is also chosen. Computing $y = \alpha^{x}$ given $\alpha$ and $x$ is
straightforward, but computing $x$ given $\alpha$ and $y$ is quite
difficult (note that $\alpha$ is a preimage of $y$).

To study how often $\alpha$ has an $\ell^{n}$th preimage, we show that there
are $\ell^{n}$ elements $P_{n,r}$ of $G(\C)$ so that $\ell^{n} P_{n,r}
= \alpha$. We let $K_{\ell^{k}}$ be the field obtained by adjoining
all of the $x$ and $y$-coordinates of the $\ell^{k}$th preimages to $\Q$. Then,
we essentially show that there is an $n$th preimage of $\alpha$ in $G(\F_{p})$
if and only if the Artin symbol $\artin{K_{\ell^{k}}/\Q}{\mathfrak{p}}$
fixes an $n$th preimage for all prime ideals $\mathfrak{p}$ above $p$.
Finally, we compute $\Gal(K_{\ell^{k}}/\Q)$ and compute the relevant
density. As a consequence, we are able to prove another conjecture
of Anderson and Bruckman.

\begin{thm}[Conjecture 2.1 of \cite{BA}]
\label{BAconj2}
Let $p$ be a prime and $\epsilon_{p} = \legen{p}{5}$. Given a prime
$q$ and integers $x$, $i$ and $j$ with $i \geq j \geq 0$, let
$M(q,x,i,j)$ denote the number of primes $p \leq x$ such that
$q^{i} \| (p - \epsilon_{p})$ and $q^{j} \| Z(p)$. Then
\[
  \zeta(q;i,j) := \lim_{x \to \infty}
  \frac{M(q,x,i,j)}{\pi(x)} = 
  \begin{cases}
    \frac{q-2}{q-1} & \text{ if } i = j = 0,\\
    q^{-2i} & \text{ if } i \geq 1 \text{ and } j = 0,\\
    \frac{q-1}{q^{2i-j+1}} & \text{ otherwise.}
  \end{cases}
\]
\end{thm}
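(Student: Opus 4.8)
\emph{Step 1: passage to $G(\F_{p})$.} Finitely many primes (those dividing $10q$ or ramifying in the fields below) may be discarded without affecting densities. Recall from Lemma~\ref{grouporder} that $G(\F_{p})$ is cyclic of order $p-\epsilon_{p}$, so $q^{i}\,\|\,(p-\epsilon_{p})$ is the condition $q^{i}\,\|\,|G(\F_{p})|$. By Lemma~\ref{Zplem}, $\ord(\alpha\in G(\F_{p}))$ equals $2Z(p)$, $Z(p)/2$, or $Z(p)$ according as $Z(p)$ is odd, twice an odd number, or divisible by $4$; hence for odd $q$ the condition $q^{j}\,\|\,Z(p)$ is literally $q^{j}\,\|\,\ord(\alpha)$, while for $q=2$ the pair $(v_{2}(Z(p)),v_{2}(\ord(\alpha)))$ is governed by the elementary dictionary that transposes $0\leftrightarrow1$ and is the identity on $\{2,3,\dots\}$. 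It therefore suffices to compute, for each prime $q$ and each $i\ge j'\ge0$, the density $D(q;i,j')$ of primes $p$ with $q^{i}\,\|\,|G(\F_{p})|$ and $q^{j'}\,\|\,\ord(\alpha)$, and then to substitute.

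\emph{Step 2: reduction to a Chebotarev count.} Since $G(\F_{p})$ is cyclic, a short argument shows that for $j'\ge1$ the conditions "$q^{i}\,\|\,|G(\F_{p})|$ and $q^{j'}\,\|\,\ord(\alpha)$" hold if and only if $q^{i}\,\|\,|G(\F_{p})|$, $\alpha$ has a $q^{i-j'}$th preimage in $G(\F_{p})$, and $\alpha$ has no $q^{i-j'+1}$th preimage; while for $j'=0$ they hold if and only if $q^{i}\,\|\,|G(\F_{p})|$ and $\alpha$ has a $q^{i}$th preimage. Each ingredient is a splitting condition: "$q^{i}\,\|\,|G(\F_{p})|$" says $\mathrm{Frob}_{p}$ fixes $G[q^{i}]$ pointwise but not all of $G[q^{i+1}]$, equivalently $p\equiv\epsilon_{p}\pmod{q^{i}}$ but $p\not\equiv\epsilon_{p}\pmod{q^{i+1}}$; and "$\alpha$ has a $q^{k}$th preimage in $G(\F_{p})$" says the Artin symbol $\artin{K_{q^{k}}/\Q}{\mathfrak{p}}$ fixes one of the preimages $P_{k,r}$ (the criterion recalled just before Theorem~\ref{BAconj2}; see Lemma~\ref{orderpreimage}). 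Assembling these, $D(q;i,j')=|S|/[L:\Q]$ by the Chebotarev density theorem, where $S\subseteq\Gal(L/\Q)$ is the conjugation-stable set of $\sigma$ meeting all the conditions, and $L=K_{q^{i-j'+1}}\cdot\Q(\sqrt5,\zeta_{q^{i+1}})$ when $j'\ge1$, $L=K_{q^{i}}\cdot\Q(\sqrt5,\zeta_{q^{i+1}})$ when $0=j'<i$, and $L=\Q(G[q])$ when $i=j'=0$; note that $\Q(\sqrt5)\subseteq K_{q^{m}}$ for $m\ge1$, so in the first two cases $L\supseteq\Q(G[q^{i+1}])$.

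\emph{Step 3: the group computation.} Building on the structure of $\Gal(K_{q^{k}}/\Q)$, the Galois action on the $q^{k}$th preimages and on $G[q^{k}]$ gives an embedding
\[
  \Gal(K_{q^{k}}/\Q)\hookrightarrow G[q^{k}]\rtimes\mathrm{Aut}(G[q^{k}])\cong(\Z/q^{k}\Z)\rtimes(\Z/q^{k}\Z)^{\times},
\]
in which the quotient acts on $G[q^{k}]\cong\Z/q^{k}\Z$ by $\sigma\mapsto\chi(\sigma)\,c(\sigma)$ ($c$ the mod-$q^{k}$ cyclotomic character, $\chi$ the nontrivial character of $\Gal(\Q(\sqrt5)/\Q)$) and the normal subgroup records translation of a fixed $q^{k}$th preimage by $G[q^{k}]$. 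Under the isomorphism $G\times_{\Q}\Q(\sqrt5)\cong\G_{m}$ the point $\alpha$ becomes $\tfrac{3+\sqrt5}{2}=\bigl(\tfrac{1+\sqrt5}{2}\bigr)^{2}$, which (for $q$ odd) is not a proper $q$th power; using this one shows the embedding is surjective, so $[K_{q^{k}}:\Q]=q^{k}\phi(q^{k})$, and then $[L:\Q]$ equals $q^{2i-j'+1}(q-1)$, $q^{2i}(q-1)$, and $q-1$ in the three cases. The count of $S$ is now elementary: the condition "trivial on $G[q^{i}]$" forces the $\mathrm{Aut}$-component seen through $K_{q^{k}}$ to be trivial, so $\mathrm{Frob}_{p}$ acts on the preimages by pure translation by some $t\in G[q^{k}]$; this $t$ fixes a $q^{k-1}$th preimage iff $t\in G[q]$ and fixes no $q^{k}$th preimage iff $t\ne0$, so $t$ runs over the $q-1$ elements of order $q$, and combined with the $q-1$ possibilities for the (independent) cyclotomic data realizing "trivial on $G[q^{i}]$, nontrivial on $G[q^{i+1}]$" one gets $|S|=(q-1)^{2}$, $q-1$, $q-2$ respectively. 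Dividing, and transporting back through the dictionary of Step~1, yields precisely the three cases of $\zeta(q;i,j)$.

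\emph{The main obstacle} is Step~3 at the primes $q=2$ and $q=5$, where the embedding above fails to be surjective. For $q=5$ the splitting field $\Q(\sqrt5)$ of $G$ is entangled with the cyclotomic tower ($\sqrt5\in\Q(\zeta_{5})$, and $\epsilon_{p}$ itself depends only on $p\bmod5$), so one must recompute the index of $\Gal(K_{5^{k}}/\Q)$ and re-describe $S$, then check that the two corrections cancel in $|S|/[L:\Q]$. For $q=2$ the $2$-power cyclotomic and Kummer towers carry extra quadratic subfields, and --- decisively --- $\tfrac{3+\sqrt5}{2}$ is already a square in $\Q(\sqrt5)$, so the Kummer degree drops by a factor of $2$ at every level and $\Gal(K_{2^{k}}/\Q)$ shrinks accordingly; this effect then has to be combined with the $Z(p)$-versus-$\ord(\alpha)$ dictionary of Step~1. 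The real content of the proof is verifying that, after all of these adjustments, the single uniform formula of Theorem~\ref{BAconj2} still emerges in every case. Everything else is routine finite group theory together with one application of the Chebotarev density theorem.
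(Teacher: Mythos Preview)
Your Steps~1--3 are exactly the paper's argument: pass from $Z(p)$ to $|\alpha|$ via Lemma~\ref{Zplem}, read off $\ord_q(|G(\F_p)|)$ and the existence of $q^n$th preimages from the Artin symbol in $K_{q^k}$, and count. The paper streamlines your Step~2 by working directly in $K_{q^{i+1}}$ rather than a compositum $L$: by Lemmas~\ref{artingrouporder} and~\ref{artinpreimage}, the conditions $\ord_q(p-\epsilon_p)=i$ and $\ord_q(|\alpha|)=j$ translate to $\ord_q(a_\sigma-1)=i$ and $\ord_q(b_\sigma)=i-j$ (or $\ge i$ when $j=0$) on $\rho(\sigma)=a_\sigma x+b_\sigma$, and the count of such pairs is elementary.

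Where you go astray is in your ``main obstacle.'' You claim the embedding $\Gal(K_{q^k}/\Q)\hookrightarrow I(q^k)$ fails to be surjective at $q=2$ and $q=5$, but Lemma~\ref{surjlem} proves surjectivity for \emph{every} prime $q$. At $q=5$ the entanglement $\sqrt5\in\Q(\zeta_5)$ only forces your character $\chi$ to factor through the cyclotomic character; since $a\mapsto\legen{a}{5}a$ is a bijection of $(\Z/5^k\Z)^\times$, the image is still all of $I(5^k)$. At $q=2$, although $\tfrac{3+\sqrt5}{2}=\bigl(\tfrac{1+\sqrt5}{2}\bigr)^2$ in $\Q(\sqrt5)$, the corresponding square root $(\sqrt5/2,\sqrt5/10)$ of $\alpha$ does not lie in $G(\Q)$, and the Jones--Rouse criterion quoted before Lemma~\ref{surjlem} (including its condition~(3)) confirms there is no degree drop. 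So the Galois count at $q=5$ is literally the generic one, and at $q=2$ the only genuine wrinkle is the $Z(p)\leftrightarrow|\alpha|$ dictionary you already identified in Step~1; indeed the paper's written proof simply restricts to $q\ne2$ and handles that dictionary separately.
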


To handle the case that $m = \prod_{i=1}^{r} \ell_{i}^{s_{i}}$ is composite,
we must show that the fields $K_{\ell_{i}^{s_{i}}}$, $1 \leq i \leq r$
are (almost) linearly disjoint. The complication in the formula for
$\zeta(m)$ arises from the fact that $\Q(\sqrt{5}) \subseteq
K_{2^{a}} \cap K_{5^{b}}$.

An outline of the paper is as follows. We review the relevant algebraic
number theory in Section~\ref{back}. In Section~\ref{modp} we connect
the Fibonacci sequence with the arithmetic of $G(\F_{p})$ and $Z(p)$
with the order of $\alpha \in G(\F_{p})$, and in turn we connect
that with the number of preimages. In Section~\ref{galois} we define
a Galois representation and prove that it is surjective (except in the
case that $2$ and $5$ both divide $m$). Finally in Section~\ref{density}
we compute the relevant densities and prove Theorem~\ref{BAconj2}
and Theorem~\ref{main}.

\section{Background}
\label{back}

We begin by reviewing some algebraic number theory. For an
introduction to these ideas, see \cite{Mollin}. If $L/K$ is a Galois
extension of number fields and $\alpha \in L$, define the norm of
$\alpha$ to be
\[
  N_{L/K}(\alpha) = \prod_{\sigma \in \Gal(L/K)} \sigma(\alpha).
\]

If $K/\Q$ is a field
extension, let $\mathcal{O}_{K}$ be the ring of algebraic integers in
$K$. We say that a prime number $p$ ramifies in $K$ if in the
factorization
\[
  p \mathcal{O}_{K} = \prod_{i=1}^{n} \mathfrak{p}_{i}^{e_{i}}
\]
we have $e_{i} > 1$ for some $i$. For a fixed $K$, only finitely many
primes $p$ ramify, and those that ramify are precisely those that
divide the discriminant $\Delta_{K}$.

Suppose now that $K/\Q$ is a Galois extension and $\mathfrak{p}$ is a
prime ideal in $\mathcal{O}_{K}$. We say that $\mathfrak{p}$ is a prime
ideal above $p$ if $\mathfrak{p} \cap \Z = (p)$. This implies that
$\mathcal{O}_{K}/\mathfrak{p}$ is a finite extension of $\F_{p}$.
If $p$ is unramified in $K/\Q$, then there is a unique element 
$\sigma \in \Gal(K/\Q)$ for which
\[
  \sigma(\alpha) \equiv \alpha^{p} \pmod{\mathfrak{p}}
\]
for all $\alpha \in \mathcal{O}_{K}$. This element is called the
\emph{Artin symbol} of $\mathfrak{p}$ and is denoted
$\artin{K/\Q}{\mathfrak{p}}$. Let
\[
  \artin{K/\Q}{p} = \left\{ \artin{K/\Q}{\mathfrak{p}} : \mathfrak{p} \text{ is a prime ideal of } \mathcal{O}_{K} \text{ above } p \right\}.
\]
This set is a conjugacy class in $\Gal(K/\Q)$.

Let $\zeta_{n} = e^{2 \pi i / n}$. It follows from the definition of the
Artin symbol that if $p$ is a prime and $\mathfrak{p}$ is a prime
ideal above $p$ in $\mathcal{O}_{\Q(\zeta_{n})}$, then
$\artin{K/\Q}{\mathfrak{p}}(\zeta_{n}) = \zeta_{n}^{p}$.

We are now ready to state the Chebotarev density theorem. This result
is the key tool we will use to compute the densities mentioned in the
introduction. To state it, let $\pi(x)$ be the number of primes $\leq
x$.
\begin{thm}[\cite{IK}, page 143]
  Suppose that $K/\Q$ is a finite Galois extension. If $\mathcal{C}
  \subset \Gal(K/\Q)$ is a conjugacy class, then
\[
  \lim_{x \to \infty} \frac{\# \{ p \leq x : p \text{ is prime and } \artin{K/\Q}{p} = \mathcal{C} \}}{\pi(x)}
  = \frac{|\mathcal{C}|}{|\Gal(K/\Q)|}.
\]
\end{thm}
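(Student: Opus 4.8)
The plan is the classical analytic proof of the Chebotarev density theorem: reduce the general Galois case to an abelian one, and then invoke the nonvanishing of $L$-functions on the line $\Re(s) = 1$. For the reduction, fix $\sigma \in \mathcal{C}$, let $H = \langle \sigma \rangle$, and let $E = K^{H}$, so that $K/E$ is cyclic (in particular abelian). Deuring's observation is that an unramified prime $p$ (outside a finite bad set) satisfies $\artin{K/\Q}{p} = \mathcal{C}$ exactly when there is a prime $\mathfrak{q}$ of $E$ over $p$ of residue degree $1$ with $\artin{K/E}{\mathfrak{q}} = \sigma$; a short combinatorial count of such $\mathfrak{q}$ lying over a given $p$ then shows that the claimed density $|\mathcal{C}|/|\Gal(K/\Q)|$ follows once Chebotarev is known for the abelian extension $K/E$. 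Equivalently, one writes the class function $\mathbf{1}_{\mathcal{C}}$ as a $\Q$-linear combination of characters induced from one-dimensional characters of cyclic subgroups (Artin's induction theorem) and argues character by character. The decisive point of this reduction is that only abelian $L$-functions will be needed, whose analytic continuation is unconditional (Hecke), whereas the holomorphy of a general Artin $L$-function is still open.

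For an abelian extension $K/E$ with group $A = \Gal(K/E)$, class field theory identifies each character $\chi$ of $A$ with a Hecke character of $E$, generalizing the way Dirichlet characters arise from cyclotomic extensions, and attaches to it the Hecke $L$-function $L(s,\chi) = \prod_{\mathfrak{q}} \left(1 - \chi(\mathfrak{q}) N\mathfrak{q}^{-s}\right)^{-1}$. The analytic inputs are: the Dedekind zeta function $\zeta_{E}(s) = L(s,\mathbf{1})$ continues meromorphically to $\Re(s) > 0$ with a simple pole at $s = 1$; and for $\chi \neq \mathbf{1}$, $L(s,\chi)$ continues holomorphically to $\Re(s) \geq 1$ and, crucially, does not vanish there. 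Taking logarithmic derivatives, $-\frac{L'}{L}(s,\chi) = \sum_{\mathfrak{n}} \Lambda_{E}(\mathfrak{n})\,\chi(\mathfrak{n})\,N\mathfrak{n}^{-s}$ for $\Re(s) > 1$, and orthogonality of the characters of $A$ isolates the contribution of the primes $\mathfrak{q}$ of $E$ with $\artin{K/E}{\mathfrak{q}} = \sigma$.

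I would then extract the asymptotics by a Tauberian theorem of Wiener--Ikehara type (or by Perron's formula together with a contour shift): the simple pole of $\zeta_{E}$ at $s = 1$ supplies the main term, the nonvanishing of every other $L(s,\chi)$ at $s = 1$ forces those contributions to be of lower order, and prime powers $N\mathfrak{q}^{k}$ with $k \geq 2$, ramified primes, and primes dividing the various indices contribute only $O(\sqrt{x})$ or $O(1)$. Assembling the constants and passing back through the reduction of the first paragraph yields
\[
  \#\{ p \leq x : p \text{ prime},\ \artin{K/\Q}{p} = \mathcal{C} \} = \frac{|\mathcal{C}|}{|\Gal(K/\Q)|}\,\pi(x) + (\text{lower order}),
\]
from which the stated limit follows.

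The main obstacle is the nonvanishing of the Hecke $L$-functions on the line $\Re(s) = 1$: this is the genuinely deep ingredient, on a par with the prime number theorem, and the possible presence of a Siegel zero of a real character is exactly what obstructs a fixed power saving in the error term — although for the qualitative statement asserted here only nonvanishing at the single point $s = 1$ is required. The secondary difficulty is the group-theoretic reduction to the abelian case, namely Deuring's argument with the cyclic subgroup generated by an element of $\mathcal{C}$ together with the prime-counting bookkeeping over the intermediate field $E$; this is elementary but must be done carefully to recover the precise constant $|\mathcal{C}|/|\Gal(K/\Q)|$.
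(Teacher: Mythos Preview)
The paper does not prove this statement at all: it is the Chebotarev density theorem, stated as background and attributed to \cite{IK}, page 143, with no proof supplied. There is therefore nothing in the paper to compare your proposal against.

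That said, your outline is a faithful sketch of the standard analytic proof (Deuring's reduction to a cyclic subextension, Hecke $L$-functions via class field theory, nonvanishing on $\Re(s)=1$, and a Tauberian argument), and it correctly identifies the nonvanishing of $L(1,\chi)$ as the deep input. For the purposes of this paper the theorem is used only as a black box to convert Galois-theoretic counts into densities, so a full proof is neither expected nor needed here.
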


We will need a standard result in Galois theory. If $K_{1}$ and $K_{2}$
are two subfields of a field $E$, let $\langle K_{1}, K_{2} \rangle$
be the smallest subfield that contains both $K_{1}$ and $K_{2}$. 
The following result describes $\Gal(\langle K_{1}, K_{2} \rangle/F)$
in terms of $\Gal(K_{1}/F)$, $\Gal(K_{2}/F)$ and $\Gal((K_{1} \cap K_{2})/F)$.
\begin{thm}[\cite{Milne}, Proposition 3.20]
\label{linearlydisjoint}
  Let $K_{1}$ and $K_{2}$ be Galois extensions of a field $F$. Then
  $K_{1} \cap K_{2}$ and $\langle K_{1}, K_{2} \rangle$ are both
  Galois over $F$ and
\[
  \Gal(\langle K_{1}, K_{2} \rangle/F) \cong \{ (\sigma,\tau) : \sigma|_{K_{1} \cap K_{2}} = \tau|_{K_{1} \cap K_{2}} \} \subseteq \Gal(K_{1}/F) \times
  \Gal(K_{2}/F).
\]
In particular,
\[
  |\langle K_{1}, K_{2} \rangle : F| = \frac{|K_{1} : F| |K_{2} : F|}{|K_{1} \cap K_{2} : F|}.
\]
\end{thm}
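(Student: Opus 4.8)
The plan is to establish the normality and separability of the two fields, build the restriction homomorphism
$\Phi\colon \Gal(\langle K_{1},K_{2}\rangle/F) \to \Gal(K_{1}/F) \times \Gal(K_{2}/F)$ given by $\sigma \mapsto (\sigma|_{K_{1}},\sigma|_{K_{2}})$, check it is injective with image inside the indicated fiber product $H := \{(\sigma,\tau) : \sigma|_{K_{1}\cap K_{2}} = \tau|_{K_{1}\cap K_{2}}\}$, and then force surjectivity onto $H$ by a degree count.

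First I would dispatch the structural claims. Separability of $K_{1}\cap K_{2}$ and of $\langle K_{1},K_{2}\rangle$ is automatic, since a subfield of a separable extension is separable and a compositum of separable extensions is separable. For normality, fix an algebraic closure $\overline{F}$ containing both fields; if $\sigma\colon \overline{F}\to\overline{F}$ fixes $F$ then $\sigma(K_{i}) = K_{i}$ because $K_{i}/F$ is normal, hence $\sigma(K_{1}\cap K_{2}) = K_{1}\cap K_{2}$ and $\sigma(\langle K_{1},K_{2}\rangle) = \langle K_{1},K_{2}\rangle$, so both are normal over $F$. Consequently every $\sigma \in \Gal(\langle K_{1},K_{2}\rangle/F)$ restricts to an element of each of $\Gal(K_{1}/F)$ and $\Gal(K_{2}/F)$, so $\Phi$ is a well-defined group homomorphism. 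It is injective, since an element fixing $K_{1}$ and $K_{2}$ pointwise fixes the field they generate; and $\sigma|_{K_{1}}$ and $\sigma|_{K_{2}}$ visibly agree on $K_{1}\cap K_{2}$, so $\mathrm{im}\,\Phi \subseteq H$.

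It remains to show $\Phi$ maps onto $H$, and this is the heart of the matter. First I would compute $|H|$: since $K_{1}\cap K_{2}$ is Galois over $F$, the restriction map $\Gal(K_{2}/F)\to\Gal((K_{1}\cap K_{2})/F)$ is surjective with every fiber of size $|K_{2}:K_{1}\cap K_{2}|$, so for each of the $|K_{1}:F|$ choices of $\sigma$ there are exactly $|K_{2}:K_{1}\cap K_{2}|$ admissible $\tau$; hence $|H| = |K_{1}:F|\,|K_{2}:K_{1}\cap K_{2}| = |K_{1}:F|\,|K_{2}:F|/|K_{1}\cap K_{2}:F|$. Because $\Phi$ is injective we already have $|\langle K_{1},K_{2}\rangle:F|\le|H|$, so it suffices to prove the reverse inequality, i.e. the degree formula $|\langle K_{1},K_{2}\rangle:K_{1}| = |K_{2}:K_{1}\cap K_{2}|$. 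For this I would invoke the theorem on natural irrationalities: write $K_{2} = (K_{1}\cap K_{2})(\theta)$ by the primitive element theorem, let $g$ be the minimal polynomial of $\theta$ over $K_{1}\cap K_{2}$ and $h$ its minimal polynomial over $K_{1}$, so that $h \mid g$ in $K_{1}[X]$ and $\langle K_{1},K_{2}\rangle = K_{1}(\theta)$. Since $K_{2}/(K_{1}\cap K_{2})$ is normal, $g$ splits completely in $K_{2}$; the roots of $h$ lie among those of $g$, hence all lie in $K_{2}$, so the coefficients of $h$ lie in $K_{1}\cap K_{2}$; as $g$ is irreducible over $K_{1}\cap K_{2}$ and $h$ is a monic non-constant divisor of it, $h = g$. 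Therefore $|\langle K_{1},K_{2}\rangle:K_{1}| = \deg h = \deg g = |K_{2}:K_{1}\cap K_{2}|$, which with the tower law gives $|\langle K_{1},K_{2}\rangle:F| = |H|$. Combined with injectivity this forces $\Phi$ to be a bijection onto $H$, and the "in particular" degree identity reads off at once.

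The step I expect to be the main obstacle is precisely this natural-irrationalities degree count, specifically the claim that the minimal polynomial of $\theta$ does not drop in degree upon base change from $K_{1}\cap K_{2}$ to $K_{1}$ — that is, $K_{1}$, although potentially large, contains no root of $g$ lying outside $K_{1}\cap K_{2}$. Making this precise is exactly where the normality of $K_{2}/(K_{1}\cap K_{2})$ is used. Everything else is routine bookkeeping with restriction maps, fiber sizes, and the tower law.
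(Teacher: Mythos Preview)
The paper does not supply its own proof of this theorem; it is quoted as a background result from Milne's notes and used as a black box. So there is no ``paper's proof'' to compare against here.

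That said, your argument is correct and is exactly the standard one (and essentially what Milne does). The structural claims about normality and separability are fine; the map $\Phi$ is well-defined, injective, and lands in $H$ for the reasons you give; the count $|H| = |K_{1}:F|\cdot|K_{2}:F|/|K_{1}\cap K_{2}:F|$ is right. The natural-irrationalities step is also correct: normality of $K_{2}/(K_{1}\cap K_{2})$ forces every root of $h$ to lie in $K_{2}$, so the coefficients of $h$ lie in $K_{1}\cap K_{2}$, whence $h=g$ by irreducibility of $g$. One small caveat worth making explicit is that you are implicitly assuming the extensions are finite (you invoke the primitive element theorem and compare cardinalities); this is consistent with the paper's context and with Milne's statement, but the theorem as displayed does not say ``finite'' outright.
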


Finally, given a group $G$, we will denote its order by $|G|$. If
$g \in G$ is an element, we will denote the order of $g$ by $|g|$. If
$n$ is a non-zero integer and $\ell$ is a prime number we will denote
by $\ord_{\ell}(n)$ the highest power of $\ell$ that divides $n$.

\section{Connection between the Fibonacci sequence and 
$G(\F_{p})$}
\label{modp}

In this section we will connect the order of the point $\alpha =
(3/2,1/2) \in G(\F_{p})$ with $Z(p)$, the smallest positive integer
$n$ so that $p | F_{n}$. Given a prime number $\ell$ we will also
relate $\ord_{\ell}(|\alpha|)$, $\ord_{\ell}(|G(\F_{p})|)$ and the
highest positive integer $m$ so that there is a $\beta \in G(\F_{p})$
with $\ell^{m} \beta = \alpha$.

If $K$ is a field, let
\[
  G(K) = \{ (x,y) \in K^{2} : x^{2} - 5y^{2} = 1 \}.
\]
The set $G(K)$ becomes an abelian group with the group operation
\[
  (x_{1}, y_{1}) * (x_{2}, y_{2}) = (x_{1} x_{2} + 5 y_{1} y_{2},
  x_{1} y_{2} + x_{2} y_{1}).
\]
The identity of $G(K)$ is $(1,0)$, and the inverse of $(x_{1},y_{1})$
is $(x_{1},-y_{1})$. The group $G$ is a twisted algebraic torus. It becomes
isomorphic to $\G_{m}$ over $K(\sqrt{5})$.

\begin{lem}
\label{twistiso}
Let $H(K) = \{ (x,y) \in K^{2} : xy = 1 \}$. If $\sqrt{5} \in K$ and
${\rm char}(K) \ne 2$, then
\[
  H(K) \cong G(K).
\]
\end{lem}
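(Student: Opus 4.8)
The plan is to produce an explicit group isomorphism between $G(K)$ and the ``hyperbola'' group $H(K) = \{(x,y) : xy = 1\}$, which is visibly isomorphic to $K^\times$ via $(x,y) \mapsto x$. Since $\sqrt{5} \in K$, the quadratic form $x^2 - 5y^2$ factors as $(x - \sqrt{5}\,y)(x + \sqrt{5}\,y)$, so a natural candidate for the map $\varphi : G(K) \to H(K)$ is
\[
  \varphi(x,y) = (x + \sqrt{5}\,y,\ x - \sqrt{5}\,y).
\]
The defining equation $x^2 - 5y^2 = 1$ is exactly the statement that the product of the two coordinates of $\varphi(x,y)$ is $1$, so $\varphi$ indeed lands in $H(K)$.

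Next I would check that $\varphi$ is a homomorphism: writing out $\varphi((x_1,y_1)*(x_2,y_2)) = \varphi(x_1x_2 + 5y_1y_2,\ x_1y_2 + x_2y_1)$, the first coordinate becomes $x_1x_2 + 5y_1y_2 + \sqrt{5}(x_1y_2 + x_2y_1) = (x_1 + \sqrt{5}\,y_1)(x_2 + \sqrt{5}\,y_2)$, and similarly the second coordinate is $(x_1 - \sqrt{5}\,y_1)(x_2 - \sqrt{5}\,y_2)$; this matches the componentwise multiplication that is the group law on $H(K)$. It also sends the identity $(1,0)$ to $(1,1)$, the identity of $H(K)$. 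Finally I would exhibit the inverse map: given $(u,v) \in H(K)$, set $x = (u+v)/2$ and $y = (u-v)/(2\sqrt{5})$, which is well-defined precisely because $\mathrm{char}(K) \neq 2$ (and $5 \neq 0$, which is automatic since $\sqrt 5$ exists and $\mathrm{char}(K)\ne 2$ forces $5$ invertible once we note $\sqrt 5 \ne 0$, as $\sqrt 5 = 0$ would give $5 = 0$ hence $\mathrm{char}(K) = 5$, handled separately if needed). One checks $x^2 - 5y^2 = uv = 1$ so this lands in $G(K)$, and that the two maps are mutually inverse, so $\varphi$ is a bijective homomorphism, hence an isomorphism.

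There is no real obstacle here; the only points requiring a moment's care are the hypotheses. The condition $\mathrm{char}(K) \neq 2$ is needed to divide by $2$ when inverting $\varphi$, and we need $5$ to be invertible in $K$ to divide by $\sqrt{5}$ — but $\sqrt{5} \in K$ together with $\sqrt 5 \neq 0$ gives this, and $\sqrt 5 = 0$ can only happen in characteristic $5$, where $x^2 - 5y^2 = 1$ degenerates to $x = \pm 1$ and $G$ is not a torus, so that case is implicitly excluded. The ``hard part,'' such as it is, is simply bookkeeping: verifying the homomorphism property is a two-line bilinear identity, and the rest is formal.
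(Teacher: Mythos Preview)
Your proof is correct and follows essentially the same approach as the paper: both define the explicit map $\varphi(x,y)=(x+\sqrt{5}\,y,\,x-\sqrt{5}\,y)$ from $G(K)$ to $H(K)$, verify it is a homomorphism, and exhibit the inverse $(u,v)\mapsto\bigl((u+v)/2,\,(u-v)/(2\sqrt{5})\bigr)$. The only cosmetic difference is that the paper phrases bijectivity as ``surjective with trivial kernel'' while you directly check the two maps are mutual inverses; your extra remark about characteristic~$5$ is a nice piece of care that the paper leaves implicit.
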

\begin{proof}
  Let $\phi: G(K) \rightarrow H(K)$ be given by
  $\phi(x,y)=(x+\sqrt{5}y,x-\sqrt{5}y)$. It is easily checked that $\phi$ is
a homomorphism. If $(x,y) \in H(K)$ then $\phi((x+y)/2,(x-y)/2\sqrt{5})=(x,y)$ 
and $((x+y)/2,(x-y)/2\sqrt{5})\in G(K)$. This shows that $\phi$ is
  surjective. Then $\ker \phi= \{(x,y)\in G(K): x+\sqrt{5}y=1 \text{
   and } x-\sqrt{5}y=1\}$. If ${\rm char}(K) \ne 2$, this implies that
$\ker \phi = \{ (1,0) \}$ and hence $\phi$ is injective. 
\end{proof}

A simple consequence of this is the following.

\begin{lem}
For any prime $p \ne 2, 5$, $G(\F_{p})$ is cyclic.
\end{lem}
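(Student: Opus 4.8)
The plan is to reduce the statement to the well-known structure theory of finite subgroups of the multiplicative group of a field. By Lemma~\ref{twistiso}, $G(\F_p) \cong H(\F_p)$ whenever $\sqrt{5} \in \F_p$ and $p \ne 2$; and $H(\F_p) \cong \F_p^\times$ via projection onto the first coordinate (the map $(x,y) \mapsto x$, whose inverse is $x \mapsto (x,x^{-1})$), so $G(\F_p)$ is cyclic in that case since $\F_p^\times$ is cyclic of order $p-1$. The remaining case is when $5$ is not a square mod $p$, i.e. $\left(\frac{p}{5}\right) = -1$. Here one passes to the quadratic extension $\F_{p^2} = \F_p(\sqrt{5})$, where $G(\F_{p^2}) \cong \F_{p^2}^\times$ is cyclic of order $p^2 - 1$, and $G(\F_p)$ is a subgroup of it.

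First I would set up the isomorphism $\psi\colon G(\F_{p^2}) \xrightarrow{\sim} \F_{p^2}^\times$ given by $\psi(x,y) = x + \sqrt{5}\,y$ (the composite of $\phi$ from Lemma~\ref{twistiso} with first-coordinate projection). Then I would identify the image of $G(\F_p)$ under $\psi$: a point $(x,y)$ with $x,y \in \F_p$ maps to $u = x + \sqrt{5}\,y$, and its Galois conjugate over $\F_p$ is $u^p = x - \sqrt{5}\,y$, so $u \cdot u^p = x^2 - 5y^2 = 1$, i.e. $u^{p+1} = 1$. Conversely, any $u \in \F_{p^2}^\times$ with $u^{p+1} = 1$ has $u^p = u^{-1}$, so writing $u = x + \sqrt{5}\,y$ with $x,y \in \F_p$ (every element of $\F_{p^2}$ has such a form) gives $x - \sqrt 5 y = u^{-1}$, hence $x^2 - 5y^2 = 1$ and $(x,y) \in G(\F_p)$. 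Therefore $\psi$ restricts to an isomorphism from $G(\F_p)$ onto the unique subgroup of $\F_{p^2}^\times$ of order $p+1$ (the kernel of the norm map, equivalently the $(p+1)$-torsion). Since every subgroup of a cyclic group is cyclic, $G(\F_p)$ is cyclic of order $p+1$.

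I do not anticipate a serious obstacle here; the only points requiring a little care are that $\F_{p^2}^\times$ is genuinely cyclic (a standard fact for any finite field) and that every element of $\F_{p^2}$ can be written uniquely as $x + \sqrt 5 y$ with $x, y \in \F_p$ (which holds precisely because $5$ is a nonsquare in the case under consideration, so $\{1, \sqrt 5\}$ is an $\F_p$-basis of $\F_{p^2}$). I would also remark that this argument in fact computes $|G(\F_p)| = p - \left(\frac p 5\right)$, which will be recorded separately (it is presumably Lemma~\ref{grouporder}). The exclusion of $p = 2$ is needed for Lemma~\ref{twistiso} (characteristic $\ne 2$), and $p = 5$ is excluded because there $5 = 0$ and the conic $x^2 - 5y^2 = 1$ degenerates.
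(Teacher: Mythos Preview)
Your proof is correct and follows essentially the same approach as the paper: in both cases one uses Lemma~\ref{twistiso} to embed $G(\F_p)$ into the multiplicative group of a finite field (either $\F_p^\times$ or $\F_{p^2}^\times$) and then invokes the fact that subgroups of cyclic groups are cyclic. Your additional identification of $G(\F_p)$ with the norm-one subgroup in the nonresidue case goes beyond what the paper does here and already yields $|G(\F_p)| = p+1$; the paper instead proves this separately in Lemma~\ref{grouporder} via a rational parametrization of the conic.
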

\begin{proof}
  We must prove this for two cases: $p \equiv 1,4 \pmod{5}$ and $p
  \equiv 2,3 \pmod{5}$. In the first case, $5$ is a square in $\F_{p}$ and 
Lemma~\ref{twistiso} shows that $G(\F_{p}) \cong \F_{p}^{\times}$ which is cyclic.
In the second case, $5$ is a square in $\F_{p^{2}}$ and so
$G(\F_{p}) \subseteq G(\F_{p^{2}}) \cong \F_{p^{2}}^{\times}$. This shows that
$G(\F_{p})$ is a subgroup of a cyclic group and is hence cyclic.
\end{proof}

The following result gives a formula for the order of $G(\F_{p})$.

\begin{lem}
\label{grouporder}
Suppose that $p \ne 2,5$ is prime. We have
\[
  |G(\F_{p})| = \begin{cases}
    p-1 & \text{ if } p \equiv 1 \text{ or } 4 \pmod{5}\\
    p+1 & \text{ if } p \equiv 2 \text{ or } 3 \pmod{5}.
\end{cases}
\]
\end{lem}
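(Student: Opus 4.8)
The plan is to split into the two cases already isolated in the previous lemma and count points directly. In the split case $p \equiv 1, 4 \pmod 5$, I would invoke Lemma~\ref{twistiso}: since $\sqrt{5} \in \F_p$ and $\mathrm{char}(\F_p) \ne 2$, the map $\phi(x,y) = (x + \sqrt5 y, x - \sqrt5 y)$ is an isomorphism $G(\F_p) \xrightarrow{\sim} H(\F_p)$, where $H(\F_p) = \{(x,y) : xy = 1\}$. But $H(\F_p)$ is visibly isomorphic to $\F_p^\times$ via $(x,y) \mapsto x$ (with inverse $x \mapsto (x, x^{-1})$), so $|G(\F_p)| = |\F_p^\times| = p - 1$, as claimed.

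For the inert case $p \equiv 2, 3 \pmod 5$, the quantity $5$ is a nonsquare in $\F_p$ but a square in $\F_{p^2}$, so $G(\F_p) \subseteq G(\F_{p^2}) \cong \F_{p^2}^\times$ by the same two applications of Lemma~\ref{twistiso}. The key observation is that $G(\F_p)$ is exactly the subgroup of $G(\F_{p^2})$ fixed by the Frobenius automorphism $\mathrm{Frob}_p : t \mapsto t^p$ of $\F_{p^2}/\F_p$. Under the isomorphism $\phi : G(\F_{p^2}) \to H(\F_{p^2}) \cong \F_{p^2}^\times$, one checks that $\mathrm{Frob}_p$ corresponds to the map $u \mapsto u^p$ on $\F_{p^2}^\times$: indeed $\phi$ is defined over $\F_p$ up to swapping the two coordinates (since $\mathrm{Frob}_p(\sqrt5) = -\sqrt5$), so Frobenius acts on $H$ by $(x,y) \mapsto (y^p, x^p)$, which is $(u, u^{-1}) \mapsto (u^{-p}, u^p)$, i.e. $u \mapsto u^{-p}$ on $\F_{p^2}^\times$. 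Hence $G(\F_p)$ corresponds to $\{u \in \F_{p^2}^\times : u^{-p} = u\} = \{u : u^{p+1} = 1\}$, the unique subgroup of $\F_{p^2}^\times$ of order $\gcd(p+1, p^2 - 1) = p+1$. Therefore $|G(\F_p)| = p + 1$.

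An alternative, more elementary route for both cases at once is a direct count: parametrize solutions of $x^2 - 5y^2 = 1$ over $\F_p$ by writing $x^2 - 1 = 5y^2$ and summing the number of $y$ with $5y^2 = x^2 - 1$ over all $x \in \F_p$; this gives $|G(\F_p)| = \sum_{x \in \F_p}\left(1 + \legen{5(x^2-1)}{p}\right)$ after the usual care at $x = \pm 1$, and evaluating the resulting Jacobi-type sum yields $p - \legen{5}{p}$. Since $\legen{5}{p} = \legen{p}{5}$ by quadratic reciprocity, this is $p-1$ when $p \equiv \pm1 \pmod 5$ and $p+1$ when $p \equiv \pm2 \pmod 5$. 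I would probably present the first argument, as it is cleaner and reuses Lemma~\ref{twistiso} directly.

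The main obstacle is the bookkeeping in the inert case: one must be careful that the isomorphism $G(\F_{p^2}) \cong \F_{p^2}^\times$ is not defined over $\F_p$ (it involves $\sqrt5 \notin \F_p$), so identifying $G(\F_p)$ as a subgroup of $\F_{p^2}^\times$ requires tracking how the nontrivial Galois automorphism of $\F_{p^2}/\F_p$ transports across the isomorphism. Once that is pinned down, the order count $\gcd(p+1, p^2-1) = p+1$ is immediate.
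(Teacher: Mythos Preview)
Your proposal is correct in both variants. The paper, however, takes a third route: it parametrizes the conic $x^{2}-5y^{2}=1$ by lines $y=m(x-1)$ through $(1,0)$, obtaining a bijection between $S=\{m\in\F_{p}:5m^{2}\ne 1\}$ and the nontrivial points of $G(\F_{p})$ via $m\mapsto\bigl(-\tfrac{1+5m^{2}}{1-5m^{2}},-\tfrac{2m}{1-5m^{2}}\bigr)$, and then simply counts $|S|$ to handle both residue classes of $p\bmod 5$ at once. Your Frobenius-fixed-point argument is more conceptual---it identifies $G(\F_{p})$ with the norm-$1$ subgroup of $\F_{p^{2}}^{\times}$ and generalizes immediately to any one-dimensional twisted torus---whereas the paper's rational parametrization is entirely elementary and sidesteps the Galois bookkeeping you flag as the main obstacle. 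Your character-sum alternative is closest in spirit to a uniform count, but the paper's bijection bypasses the need to evaluate the Jacobi-type sum $\sum_{x}\legen{x^{2}-1}{p}$ altogether.
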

\begin{proof}
  In the case that $p \equiv 1,4 \pmod{5}$ the previous lemma gives
  $|G(\F_p)|=|\F_p^\times|=p-1$.  However the following proof handles
  both cases. A line through the point $(1,0)$ intersects the curve
  $x^{2} - 5y^{2} = 1$ in another rational point if and only if the
  slope is rational; the same argument applies in $\F_{p}$.  Such a
  line has the form $y \equiv m(x-1) \pmod{p}$.  Every point in
  $G(\F_{p})$ other than $(1,0)$ lies on one such a line. Computing
  the intersection of this line with $x^{2} - 5y^{2} = 1$ shows that
  $x = -\frac{1 + 5m^{2}}{1-5m^{2}}$ and $y = -\frac{2m}{1-5m^{2}}$.
  This gives rise to a map $f : S \to G(\F_{p}) - \{ (1,0) \}$ where
  $S = \{ m \in \F_{p} : 5m^{2} \ne 1 \}$. The argument above shows
  that this map is surjective, and a straightforward calculation shows
  that $f$ is injective.  It follows that $|G(\F_{p}) = |S|$. We have that $|S| = p-1$ when $p \equiv 1, 4 \pmod{5}$ and $|S| = p+1$ otherwise. This completes the proof.
\end{proof}

The next result shows that the Fibonacci and Lucas sequences
occur as the coordinates of multiples of $\alpha = (3/2,1/2) \in G(\Q)$.

\begin{lem}
\label{multiples}
We have $n \alpha = \left(\frac{L_{2n}}{2}, \frac{F_{2n}}{2}\right)$.
\end{lem}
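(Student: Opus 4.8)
The plan is to argue by induction on $n$, unwinding $(n+1)\alpha = (n\alpha) \ast \alpha$ via the explicit group law and reducing everything to standard recurrences for $F_m$ and $L_m$. The base cases are immediate: $0\alpha = (1,0) = (L_0/2, F_0/2)$ because $L_0 = 2$, $F_0 = 0$, and $1\alpha = (3/2,1/2) = (L_2/2, F_2/2)$ because $L_2 = 3$, $F_2 = 1$.

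For the inductive step I would assume $n\alpha = (L_{2n}/2, F_{2n}/2)$ and compute
\[
(n+1)\alpha = \left(\tfrac{L_{2n}}{2},\tfrac{F_{2n}}{2}\right) \ast \left(\tfrac{3}{2},\tfrac{1}{2}\right) = \left(\frac{3L_{2n}+5F_{2n}}{4},\ \frac{L_{2n}+3F_{2n}}{4}\right),
\]
so it remains to check the identities $2L_{m+2} = 3L_m + 5F_m$ and $2F_{m+2} = L_m + 3F_m$ (I only need them for even $m$, but they hold for all $m \geq 0$). I would obtain these from the more basic pair $2L_{m+1} = L_m + 5F_m$ and $2F_{m+1} = L_m + F_m$, which follow by a short simultaneous induction straight from the defining recurrences $L_{m+2}=L_{m+1}+L_m$, $F_{m+2}=F_{m+1}+F_m$ (equivalently, they are restatements of Binet's formulas $L_m = \varphi^m+\psi^m$, $F_m = (\varphi^m-\psi^m)/\sqrt 5$ with $\varphi = \tfrac{1+\sqrt5}{2}$, $\psi = \tfrac{1-\sqrt5}{2}$).

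A cleaner, coordinate-free alternative uses Lemma~\ref{twistiso}: over $\Q(\sqrt 5)$ the isomorphism $\phi$ carries $\alpha$ to $\big(\tfrac{3+\sqrt5}{2},\tfrac{3-\sqrt5}{2}\big) = (\varphi^2,\psi^2)$, whence $\phi(n\alpha) = (\varphi^{2n},\psi^{2n})$ since the group law on $H$ is coordinatewise multiplication; applying $\phi^{-1}$ and Binet's formulas then returns $(L_{2n}/2, F_{2n}/2)$, and injectivity of $\phi$ on $G(\Q(\sqrt5)) \supseteq G(\Q)$ pins down $n\alpha$. I do not expect any real obstacle here; the only thing to get right is the exact form of the auxiliary $F$--$L$ identities needed to close the induction, which the displayed computation above already dictates.
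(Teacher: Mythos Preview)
Your primary argument is correct and essentially the same as the paper's: both proceed by induction on $n$, compute $(n+1)\alpha$ via the group law to obtain $\left(\frac{3L_{2n}+5F_{2n}}{4},\frac{L_{2n}+3F_{2n}}{4}\right)$, and then reduce to standard Fibonacci--Lucas identities (the paper uses $5F_n=L_{n+1}+L_{n-1}$ and $L_n=F_{n+1}+F_{n-1}$, you use the equivalent pair $2L_{m+1}=L_m+5F_m$, $2F_{m+1}=L_m+F_m$). Your alternative via Lemma~\ref{twistiso} and Binet's formulas is also correct and is a genuinely cleaner, non-inductive route the paper does not take: it exploits that $\phi(\alpha)=(\varphi^2,\psi^2)$ so $\phi(n\alpha)=(\varphi^{2n},\psi^{2n})$, which buys you the result in one line at the cost of working over $\Q(\sqrt{5})$.
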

\begin{proof}
  We prove this by induction on $n$.  The base case for $n=1$ is
  $\alpha=(3/2,1/2)=(L_2/2,F_2/2)$.  Our induction hypothesis for $n$
  is $n\alpha=(L_{2n}/2,F_{2n}/2)$.  Then
  $n\alpha*\alpha=(L_{2n}/2,F_{2n}/2)*(3/2,1/2)=((3L_{2n}+5F_{2n})/4,(L_{2n}+3F_{2n})/4)$.
  Using the identity $5F_n=L_{n+1}+L_{n-1}$ on the first coordinate we
  get
\begin{align*}
3L_{2n}+5F_{2n}&=3L_{2n}+L_{2n+1}+L_{2n-1}\\
&=3L_{2n}+2L_{2n+1}-L_{2n}\\
&=2(L_{2n}+L_{2n+1}) =2L_{2n+2}.
\end{align*}
Then using the identity $L_n=F_{n+1}+F_{n-1}$ on the second coordinate we get
\begin{align*}
L_{2n}+3F_{2n}&=F_{2n+1}+F_{2n-1}+3F_{2n}\\
&=F_{2n+1}+F_{2n+1}-F_{2n}+3F_{2n}\\
&=2(F_{2n+1}+F_{2n}) =2F_{2n+2}.
\end{align*}
Then we have shown $(n+1)(3/2,1/2)=(L_{2n+2}/2,F_{2n+2}/2)$, completing the 
induction.
\end{proof}

The next result connects the order of $\alpha \in G(\F_{p})$ with
$Z(p)$.

\begin{lem}
\label{Zplem}
Let $p \ne 2, 5$ be prime. Then
\[
  Z(p) = \begin{cases}
    2 |\alpha| & \text{ if } |\alpha| \text{ is odd, }\\
    \frac{1}{2} |\alpha| & \text{ if } |\alpha| \equiv 2 \pmod{4}, \text{ and}\\
    |\alpha| & \text{ if } |\alpha| \equiv 0 \pmod{4}.
\end{cases}
\]
\end{lem}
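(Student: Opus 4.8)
The plan is to exploit Lemma~\ref{multiples} directly. Since $n\alpha = (L_{2n}/2, F_{2n}/2)$, the $y$-coordinate of $n\alpha$ vanishes in $G(\F_p)$ exactly when $p \mid F_{2n}$, and the only points of $G(\F_p)$ with $y$-coordinate $0$ are $(1,0)$ and $(-1,0)$, which form the subgroup $T = \{(1,0),(-1,0)\}$ of order $2$ (here $p \neq 2$ guarantees $(-1,0) \neq (1,0)$). Because $p \neq 2,5$, the two lemmas preceding Lemma~\ref{multiples} show that $G(\F_p)$ is cyclic of order $p \pm 1$, which is even, so $(-1,0)$ is the \emph{unique} element of order $2$ in $G(\F_p)$. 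Alongside this I would record the classical identities
\[
  F_{2n} = F_n L_n, \qquad L_{2n} = L_n^2 - 2(-1)^n, \qquad L_n^2 - 5 F_n^2 = 4(-1)^n,
\]
each of which follows in one line from the Binet formulas $F_n = (\phi^n - \psi^n)/\sqrt{5}$, $L_n = \phi^n + \psi^n$ (with $\phi,\psi = (1 \pm \sqrt{5})/2$, so $\phi\psi = -1$), or by the kind of induction used to prove Lemma~\ref{multiples}.

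Write $k = |\alpha|$. The first key step is to locate $(1,0)$ and $(-1,0)$ among the multiples of $\alpha$: one has $n\alpha \in T$ iff $2n\alpha = (1,0)$ iff $k \mid 2n$. If $k$ is odd this means $k \mid n$, in which case $n\alpha = (1,0)$, while $(-1,0)$ is never a multiple of $\alpha$ (a cyclic group of odd order has no element of order $2$). If $k$ is even it means $(k/2) \mid n$; such an $n\alpha$ equals $(1,0)$ when $k \mid n$, and otherwise $n \equiv k/2 \pmod{k}$, whence $n\alpha = (k/2)\alpha$, an element of order $2$, hence equal to $(-1,0)$.

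Next I would compute $Z(p)$, the least $m \geq 1$ with $p \mid F_m$, treating even and odd $m$ separately. For $m = 2n$ even, $p \mid F_m$ holds iff $n\alpha \in T$, so by the previous paragraph the least such even $m$ is $2k$ when $k$ is odd and $k$ when $k$ is even. For $m$ odd, $p \mid F_m$ forces $p \mid F_{2m} = F_m L_m$, so $m\alpha \in T$; conversely, if $m$ is odd and $m\alpha \in T$, then $L_m^2 - 5F_m^2 = -4$ together with $p \neq 2$ shows $p$ divides exactly one of $F_m$, $L_m$, and evaluating $L_{2m} = L_m^2 - 2(-1)^m$ modulo $p$ gives $m\alpha = (-1,0)$ when $p \mid F_m$ and $m\alpha = (1,0)$ when $p \mid L_m$. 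Hence for odd $m$, $p \mid F_m$ iff $m\alpha = (-1,0)$, which by the previous paragraph happens iff $k$ is even and $m \equiv k/2 \pmod{k}$; since every element of that residue class has the same parity as $k/2$, such an odd $m$ exists iff $k \equiv 2 \pmod{4}$, the least one then being $k/2$. Combining the two cases: $Z(p) = 2k$ if $k$ is odd, $Z(p) = \min(k/2,k) = k/2$ if $k \equiv 2 \pmod{4}$, and $Z(p) = k$ if $k \equiv 0 \pmod{4}$, which is precisely the assertion of the lemma.

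I expect the only real obstacle to be the bookkeeping in the odd-$m$ case: one must correctly determine the sign of $L_{2m} \bmod p$ — equivalently, whether $m\alpha$ lands on $(1,0)$ or $(-1,0)$ — according to whether $p \mid F_m$ or $p \mid L_m$, and then check that the congruence $m \equiv k/2 \pmod{k}$ contains odd integers exactly when $k \equiv 2 \pmod{4}$. Everything else is a routine comparison of the smallest even and smallest odd values of $m$ with $p \mid F_m$.
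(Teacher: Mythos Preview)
Your proposal is correct and follows essentially the same approach as the paper: both arguments use the identities $F_{2n}=F_nL_n$, $L_n^2-5F_n^2=4(-1)^n$, and $L_n^2=L_{2n}+2(-1)^n$ to show that $p\mid F_n$ is governed by whether $n\alpha$ lands on $(1,0)$ or $(-1,0)$, and then do the same case analysis on $|\alpha|\bmod 4$. The only difference is organizational---the paper proves the single equivalence ``$p\mid F_n \iff n\alpha=((-1)^n,0)$'' for all $n$ at once, whereas you split into even and odd $m$ and, for even $m=2n$, read $p\mid F_{2n}$ directly off the $y$-coordinate of $n\alpha$ via Lemma~\ref{multiples}; this is a minor repackaging rather than a different argument.
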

\begin{proof}
We need a few identities involving the Lucas and Fibonacci sequences:
\begin{align*}
L^2_n-5F^2_n=4(-1)^n,\\
L^2_n=L_{2n}+2(-1)^n,\\
F_{2n}=F_nL_n.
\end{align*}
First we wish to show $p \mid F_n$ if and only if $p \mid F_{2n}$ and
$L_n^2 \equiv 4(-1)^n \pmod{p}$.  Suppose $p \mid F_n$.  Clearly $p
\mid F_nL_n=F_{2n}$ and $p \mid 5F_n=L_n^2-4(-1)^n$.  In the other
direction, if $p \mid F_{2n}$ either $p \mid F_n$ or $p \mid L_n$.  From
$L_n^2 \equiv 4(-1)^n \pmod{p}$, we can conclude that $p \nmid L_n$.
Therefore $p \mid F_n$.  

Since $L_n^2 \equiv 4(-1)^n \pmod{p}$
implies $L_{2n}+2(-1)^n \equiv 4(-1)^n \pmod{p}$, we have $L_{2n}
\equiv 2(-1)^n \pmod{p}$. Additionally as $n\alpha =
(L_{2n}/2,F_{2n}/2)$, it is clear that $p \mid F_{2n}$ and
$L_{2n}\equiv 2(-1)^n \pmod{p}$ if and only if $n\alpha \equiv
((-1)^n,0)\pmod{p}$.  Therefore we conclude $p \mid F_n$ if and only
if $n\alpha\equiv ((-1)^n,0) \pmod{p}$.

It follows that $Z(p)$ is the smallest positive integer $n$
for which $n \alpha \equiv ((-1)^{n},0) \pmod{p}$.
Since $G(\F_{p})$ is cyclic, $(-1,0)$ is the unique element in
$G(\F_{p})$ of order $2$.  If $\abs{\alpha}$ is odd, then $(-1,0)
\not\in \langle \alpha \rangle$. Therefore $n$ is even,
$\abs{\alpha}\mid n$, and $Z(p)=2\abs{\alpha}$.  If $\abs{\alpha}$
is even, then $(-1,0) \in \langle
\alpha \rangle$ and $(\abs{\alpha}/2)\alpha$ has order $2$.
When $\abs{\alpha} \equiv 2 \pmod{4}$, 
$(\abs{\alpha}/2)\alpha \equiv (-1,0) \equiv
(-1^{\abs{\alpha}/2},0) \pmod{p}$ and $Z(p)=\abs{\alpha}/2$.  When
$\abs{\alpha} \equiv 0 \pmod{4}$, then $(\abs{\alpha}/2)\alpha \equiv
(-1,0) \not\equiv (-1^{\abs{\alpha}/2},0) \pmod{p}$.  However
$\abs{\alpha} \equiv (1,0) \equiv ((-1)^{\abs{\alpha}},0) \pmod{p}$ and
$Z(p)=\abs{\alpha}$.
\end{proof}

The next lemma relates the order of $\alpha$ with the largest $m$
for which $\alpha$ has an $\ell^{m}$-th preimage. We will be able to detect
the number of preimages using Galois theoretic data, and hence determine 
the order.

\begin{lem}
\label{orderpreimage}
Suppose that $\ell$ and $p$ are prime numbers and $\alpha \in G(\F_{p})$.
If $\ell \nmid |\alpha|$, then there are infinitely many preimages
of $\alpha$ under multiplication by $\ell$. Suppose that
$\ell$ divides $|\alpha|$ and there is an $\ell^{m}$-th preimage
of $\alpha$ in $G(\F_{p})$, but no $\ell^{m+1}$-th preimage. Then
\[
  \ord_{\ell}(|\alpha|) = \ord_{\ell}(|G(\F_{p})|) - m.
\]
\end{lem}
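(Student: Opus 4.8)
The plan is to work in the cyclic group $G(\F_p)$, write $N = |G(\F_p)|$, and set $a = \ord_\ell(N)$ and $b = \ord_\ell(|\alpha|)$, so that the claim becomes $b = a - m$. First I would dispose of the case $\ell \nmid |\alpha|$: since $G(\F_p)$ is finite, an element $\beta$ with $\ell\beta = \alpha$ exists exactly when $\alpha$ lies in the image of the multiplication-by-$\ell$ map on $G(\F_p)$; writing $G(\F_p) = \langle \gamma \rangle$ and $\alpha = k\gamma$, one needs $\ell \mid k$ or, more precisely, $\alpha \in \ell \cdot G(\F_p)$, and since $\alpha$ has order prime to $\ell$ it is an $\ell$-th power (the map $\beta \mapsto \ell\beta$ is surjective on the prime-to-$\ell$ part of the cyclic group); iterating, $\alpha$ has an $\ell^m$-th preimage in $G(\F_p)$ for every $m$, and since the kernel of multiplication by $\ell^m$ is nontrivial once $\ell \mid N$ (and if $\ell \nmid N$ there are no $\ell$-torsion issues but still preimages exist), there are in fact infinitely many preimages — here one has to be slightly careful about what "infinitely many" means, presumably counting preimages in $G(\overline{\F_p})$ or simply noting the statement is about the existence of arbitrarily high preimages; I would phrase it as: for each $m$ there is an $\ell^m$-th preimage.

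For the main case, suppose $\ell \mid |\alpha|$ so $b \ge 1$, hence $a \ge 1$. Write $G(\F_p) = \langle \gamma \rangle$ with $|\gamma| = N$, and $\alpha = k\gamma$ where $\ell^{a-b} \| k$ — this is the key translation: $|\alpha| = N/\gcd(N,k)$, so $\ord_\ell(|\alpha|) = a - \min(a, \ord_\ell(k))$, and since $b = \ord_\ell(|\alpha|) \ge 1$ we must have $\ord_\ell(k) = a - b < a$. Now $\alpha$ has an $\ell^m$-th preimage in $G(\F_p)$ iff there exists $j$ with $\ell^m j \equiv k \pmod{N}$, i.e. iff $\ell^m \mid k$ in $\Z/N\Z$, which (because $\ord_\ell(k) = a-b$ and $\ell^a \| N$) holds iff $m \le a - b$. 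So the largest $m$ with an $\ell^m$-th preimage is exactly $a - b$, giving $b = a - m$ as claimed.

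The routine verification will be the modular-arithmetic bookkeeping: that solvability of $\ell^m j \equiv k \pmod N$ is equivalent to $\ell^{\min(m, a)} \mid k$, and combining this with $\ord_\ell(k) = a - b$. The only genuine subtlety — and the step I'd flag as the main obstacle — is pinning down the correct interpretation of "infinitely many preimages" in the $\ell \nmid |\alpha|$ case and making sure the logic is clean there; one clean route is to observe that multiplication by $\ell$ is an automorphism of the prime-to-$\ell$ part of the cyclic group $G(\F_p)$, so $\alpha$ (lying in that part) has a unique $\ell$-th preimage inside it, but additional preimages differing by $\ell$-torsion exist whenever $\ell \mid N$; if $\ell \nmid N$ there is exactly one preimage at each stage, so the statement presumably intends preimages over the algebraic closure, where the $\ell^m$-torsion of $G$ has order $\ell^m \to \infty$. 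I would state the lemma's first assertion in whichever of these forms matches the paper's later usage, and otherwise the proof is a short exercise in cyclic groups.
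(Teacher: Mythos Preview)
Your argument is correct and follows essentially the same route as the paper: write $\alpha$ as a multiple of a generator of the cyclic group $G(\F_p)$ and reduce everything to divisibility bookkeeping. Your flag about the phrase ``infinitely many preimages'' is apt; the paper interprets it as ``there is an $\ell^{m}$-th preimage for every $m$'' (shown by noting that multiplication by $\ell$ is an automorphism of $\langle\alpha\rangle$ when $\ell\nmid|\alpha|$), which is the sense used downstream.
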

\begin{proof}
Let $\alpha \in G(\F_{p})$, $H=\langle \alpha \rangle$ and $\phi : H \to H$
be given by $\phi(x) = \ell x$. We have that $\phi$ is an automorphism
of $H$ if and only if $\gcd(\ell,|H|) = 1$ and this implies that $\alpha$
has an $\ell^{m}$th preimage for all $m$ if and only 
if $\ord_{\ell}(|\alpha|) = 0$.

On the other hand, suppose that $\ell$ divides $|\alpha|$ and
write $\alpha = s \gamma$, where $\gamma$ is a generator of the cyclic
group $G(\F_{p})$. Write $s = \gcd(s,|G(\F_{p})|) s'$ and note that
$s' \gamma$ is also a generator of $G(\F_{p})$. By replacing
$\gamma$ with $s' \gamma$, we may assume that $s$ divides $|G(\F_{p})|$.

Let $m = \ord_{\ell}(s)$. Then $(s/\ell^{m}) \gamma$ is an $\ell^{m}$th
preimage of $\alpha$. It is easy to see, however that if an
$\ell^{m+1}$st preimage $\beta_{m+1}$ of $\alpha$ existed,
then its order would be $\ell^{m+1} |\alpha|$, and this does not divide
$|G(\F_{p})|$. Finally, the order of $s \gamma$ is
$|\alpha| = \frac{|G(\F_{p})|}{s}$ and this gives
\[
  \ord_{\ell}(|\alpha|) = \ord_{\ell}(|G(\F_{p})|) - m,
\]
as desired.
\end{proof}

\section{Galois theory}
\label{galois}

For a positive integer $m$, let $\gamma_{m} =
\sqrt[m]{\frac{3+\sqrt{5}}{2}}$.  The isomorphism in
Lemma~\ref{twistiso} shows that if $m$ is a positive integer, there
are precisely $m$ elements $P_{m,r} \in G(\C)$ with $m P_{m,r} =
\alpha$. These are given by
\[
  P_{m,r} = \left( \frac{\zeta_{m}^{r} \gamma_{m} + \zeta_{m}^{-r} \gamma_{m}^{-1}}{2}, \frac{\zeta_{m}^{r} \gamma_{m} - \zeta_{m}^{-r} \gamma_{m}^{-1}}{2 \sqrt{5}} \right), \quad 0 \leq r \leq m-1.
\]
Note that $m (P_{m,r} - P_{m,s}) = \alpha - \alpha = 0$ and so
$P_{m,r} - P_{m,s}$ is a point in $G(\C)$ with order dividing $m$.

To study the frequency with which $m$ divides
$|\alpha|$ in $G(\F_{p})$, we need to know how often the points
$P_{m,r}$ ``live in $G(\F_{p})$.'' More precisely, this means
that if $K_{m}$ is the field obtained by adjoining the $x$ and
$y$-coordinates of all the $P_{m,r}$ to $\Q$, we need to determine how
often the rational prime $p$ is contained in a prime ideal
$\mathfrak{p} \subset \mathcal{O}_{K_{m}}$ so that
\[
  \artin{K_{m}/\Q}{\mathfrak{p}}(P_{m,r}) = P_{m,r}.
\]
If the above equation is true, this implies that the point $P_{m,r}$ (which for 
all but finitely many primes $p$ can be thought of as an element of the finite 
field $\mathcal{O}_{K_{m}}/\mathfrak{p}$) is fixed by a generator of
the Galois group of $\mathcal{O}_{K_{m}}/\mathfrak{p}$ over $\F_{p}$, i.e.,
there is an element $\beta \in G(\F_{p})$ so that $m \beta = \alpha$.
We must first understand $\Gal(K_{m}/\Q)$ and
its action on the points $\{ P_{m,r} \}$. For simplicity, let
\[
  P_{m} := P_{m,0} = \left(\frac{\gamma_{m} + \gamma_{m}^{-1}}{2},
  \frac{\gamma_{m} - \gamma_{m}^{-1}}{2 \sqrt{5}}\right), 
  Q_{m} := P_{m,1} - P_{m,0} = \left(\frac{\zeta_{m} + \zeta_{m}^{-1}}{2},
  \frac{\zeta_{m} - \zeta_{m}^{-1}}{2 \sqrt{5}}\right).
\]

Let $I(m) = \{ ax + b : a \in (\Z/m\Z)^{\times}, b \in (\Z/m\Z) \}$ denote
the affine group over $\Z/m\Z$. The group law on $I(m)$ is composition.

\begin{lem}
\label{homdef}
The extension $K_{m}/\Q$ is Galois. For each $\sigma \in \Gal(K_{m}/\Q)$,
there are elements $a_{\sigma} \in (\Z/m\Z)^{\times}$ and 
$b_{\sigma} \in (\Z/m\Z)$ so that $\sigma(P_{m} + r Q_{m}) = P_{m} + (a_{\sigma} r + b_{\sigma}) Q_{m}$ for $0 \leq r \leq m-1$.
The map
\[
  \rho : \Gal(K_{m}/\Q) \to I(m)
\]
given by $\rho(\sigma) = a_{\sigma} x + b_{\sigma}$ is an injective homomorphism.
\end{lem}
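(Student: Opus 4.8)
The plan is to verify the three assertions in turn: that $K_m/\Q$ is Galois, that each $\sigma$ acts on the preimages through an affine substitution of the index $r$, and that the resulting map $\rho$ is an injective homomorphism. First I would observe that $K_m$ is generated over $\Q$ by $\gamma_m = \sqrt[m]{(3+\sqrt5)/2}$ together with $\zeta_m + \zeta_m^{-1}$, $(\zeta_m - \zeta_m^{-1})/\sqrt5$, and $\sqrt5$ — indeed every coordinate of every $P_{m,r}$ lies in $\Q(\zeta_m, \gamma_m, \sqrt5)$, and conversely $\sqrt5 = \gamma_m^m - \gamma_m^{-m} \cdot(\text{something})$, or more simply $\sqrt 5$ appears since $(3+\sqrt5)/2 = \gamma_m^m$ while $\gamma_m^{-m} = (3-\sqrt5)/2$, so $\sqrt5 = \gamma_m^m - \gamma_m^{-m} \in K_m$, and then $\zeta_m$ itself is recovered as $\gamma_m^r$-ratios once one has two preimages. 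Hence $K_m \subseteq \Q(\zeta_m, \gamma_m)$, and in fact one checks $K_m = \Q(\zeta_m, \gamma_m)$: this is the splitting field of $(x^m - \tfrac{3+\sqrt5}{2})(x^m - \tfrac{3-\sqrt5}{2})$ over $\Q$, hence normal, and separable since $\mathrm{char}\,\Q = 0$, so $K_m/\Q$ is Galois.

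Next, for the affine action: work inside $H(\overline{\Q})$ via the isomorphism $\phi$ of Lemma~\ref{twistiso}, under which $\alpha$ corresponds to $\phi(\alpha) = (\tfrac{3+\sqrt5}{2}, \tfrac{3-\sqrt5}{2}) = (\gamma_m^m, \gamma_m^{-m})$ and the preimages $P_{m,r}$ correspond to $(\zeta_m^r \gamma_m, \zeta_m^{-r}\gamma_m^{-1})$. For $\sigma \in \Gal(K_m/\Q)$, applying $\sigma$ sends $\gamma_m \mapsto \zeta_m^{b} \gamma_m$ for some $b \in \Z/m\Z$ (since $\sigma(\gamma_m)$ is another $m$th root of $\sigma(\gamma_m^m)$, and $\sigma$ fixes or negates $\sqrt5$; in the case it negates $\sqrt5$ it interchanges the two factors, but composing with the inversion $(x,y)\mapsto(y,x)$, which on $G$ is $P\mapsto -P$, reduces to the $\sqrt5$-fixing case), and $\sigma(\zeta_m) = \zeta_m^{a}$ for some $a \in (\Z/m\Z)^\times$. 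Then $\sigma(\zeta_m^r\gamma_m) = \zeta_m^{ar+b}\gamma_m$, i.e. $\sigma(P_{m,r}) = P_{m, ar+b}$. Since $P_{m,r} = P_m + rQ_m$ in $G(\C)$ — a telescoping check from the coordinate formula, or immediately from $\phi(P_{m,r}) = \phi(P_m)\cdot\phi(Q_m)^r$ in $H$ — this reads $\sigma(P_m + rQ_m) = P_m + (a_\sigma r + b_\sigma)Q_m$ with $a_\sigma = a$, $b_\sigma = b$.

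Finally, well-definedness and the homomorphism property. The pair $(a_\sigma, b_\sigma)$ is determined by $\sigma$ once we know $a_\sigma = a_\sigma \cdot 1$ is forced: concretely $a_\sigma$ is read off from $\sigma(Q_m) = a_\sigma Q_m$ and $b_\sigma$ from $\sigma(P_m) = P_m + b_\sigma Q_m$, so $\rho$ is a genuine function. For $\sigma, \tau$ one computes $(\sigma\tau)(P_m + rQ_m) = \sigma(P_m + (a_\tau r + b_\tau)Q_m) = P_m + (a_\sigma(a_\tau r + b_\tau) + b_\sigma)Q_m = P_m + (a_\sigma a_\tau r + (a_\sigma b_\tau + b_\sigma))Q_m$, which matches the composition $(a_\sigma x + b_\sigma)\circ(a_\tau x + b_\tau)$ in $I(m)$, so $\rho$ is a homomorphism. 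Injectivity: if $\rho(\sigma) = x$ (the identity), then $\sigma$ fixes every $P_{m,r}$, hence fixes all their coordinates, hence fixes $K_m$ pointwise, so $\sigma = 1$. The main obstacle is the bookkeeping in the action on $\gamma_m$: one must be careful that $\sigma$ may send $\sqrt5 \mapsto -\sqrt5$, which a priori sends $\gamma_m \mapsto \zeta_m^b \bar\gamma_m$ where $\bar\gamma_m^m = (3-\sqrt5)/2 = \gamma_m^{-m}$, so $\sigma(\gamma_m)\gamma_m$ is an $m$th root of unity — this is exactly the inversion phenomenon on $G$, and it is accommodated automatically because $\rho$ lands in the full affine group $I(m)$ (allowing $a_\sigma = -1$ and reindexing) rather than in a group of translations; making this precise is where the care is needed, but it is not deep.
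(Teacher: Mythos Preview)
Your argument is correct in outline but takes a more computational route than the paper. The paper avoids explicit coordinate tracking entirely: it simply observes that any $\sigma$ acts as a group automorphism of $G(K_m)$ fixing $\alpha \in G(\Q)$, hence must permute the $m$ preimages $\{P_{m,r}\}$ of $\alpha$, giving $\sigma(P_m) = P_m + b_\sigma Q_m$ directly; likewise $\sigma$ permutes the $m$-torsion, so $\sigma(Q_m) = a_\sigma Q_m$ for some unit $a_\sigma$. This structural argument never mentions $\gamma_m$, $\zeta_m$, or $\sqrt 5$, and in particular completely sidesteps the case split on $\sigma(\sqrt 5)$ that you flag as ``the main obstacle.'' Your route through $\phi$ and explicit roots is also valid, but your handling of the $\sqrt 5$-negating case is imprecise: the claim ``$\sigma$ sends $\gamma_m \mapsto \zeta_m^b \gamma_m$'' is false when $\sigma(\sqrt 5) = -\sqrt 5$ (one gets $\sigma(\gamma_m) = \zeta_m^{c}\gamma_m^{-1}$ instead), and ``composing with the inversion \dots\ reduces to the $\sqrt 5$-fixing case'' is not literally a reduction on the Galois action. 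What actually happens is that $\sigma$ and $\phi$ commute up to the swap $(u,v)\mapsto(v,u)$ on $H$, so $\phi(\sigma(P_{m,r})) = (\zeta_m^{-a'r-c}\gamma_m,\ \zeta_m^{a'r+c}\gamma_m^{-1})$ and hence $\sigma(P_{m,r}) = P_{m,\,-a'r-c}$, still affine in $r$. Your final paragraph correctly identifies this resolution, so the proof goes through; the paper's abstract argument simply buys freedom from all of this bookkeeping.
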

\begin{proof}
  The field $K_m$ lies in the Galois extension
  $\Q(\gamma_m,\zeta_m,\sqrt{5})$ over $\Q$.  Then $K_m$ is separable
  as it is an intermediate field of a separable extension, it is a
  splitting field as we are adjoining all conjugates of the
  coordinates $P_{m}$ and $Q_{m}$.  We conclude that the extension
  $K_m/\Q$ is Galois.  For $\sigma \in \Gal(K_{m}/\Q)$, $\sigma
  (mP_{m,r})=m\sigma(P_{m,r})=\alpha$.  Therefore $\Gal(K_{m}/\Q)$
  must take a preimage of $\alpha$ to another preimage of $\alpha$.
  Hence $\sigma(P_{m,0})=P_{m,b_{\sigma}}=P_{m,0}+b_{\sigma}Q_{m}$.
  Additionally $\sigma(mQ_{m})=m\sigma(Q_{m})=0$.  So $\Gal(K_{m}/\Q)$
  must take an element of order $m$ to another element of order $m$.
  Hence $\sigma(Q_{m})=a_{\sigma}Q_{m}$.  This yields
  $\sigma(P_{m}+rQ_{m})=\sigma(P_{m})+r\sigma(Q_{m})=P_m+(a_{\sigma}
  r+ b_{\sigma})Q_{m}$. From the definition of $\rho$, it is easy to see
that $\rho$ is a homomorphism. The kernel of $\rho$ is the set $\{ \sigma
  \in \Gal(K_{m}/\Q) : \rho(\sigma)=x\}$. If $\sigma \in \ker \rho$,
then $\sigma(P_{m} + rQ_{m}) = P_{m} + rQ_{m}$ for all $r$. This implies
that $P_{m}$ and $Q_{m}$ are both fixed by $\sigma$. 
Since the coordinates of $P_{m}$ and $Q_{m}$ generate $K_{m}$ over $\Q$,
it follows that $\sigma$ fixes $K_{m}$ and so $\ker \rho = 1$.
\end{proof}

The Chinese remainder theorem allows one to see that if the prime factorization
of $m = \prod_{i=1}^{r} \ell_{i}^{s_{i}}$, then
\[
  I(m) \cong \prod_{i=1}^{r} I(\ell_{i}^{s_{i}}).
\]
We will therefore begin by studying the images of $\Gal(K_{m}/\Q)
\to I(m)$ in the case that $m$ is a prime power. This case was studied
by Jones and the second author in \cite{JonesRouse}. In particular,
Proposition 4.1 and Theorem 4.2 (with the special case $d = 5$) handle the 
case that $m = \ell^{k}$ is a prime. Specifically, if $F$ is a number field, 
the map
\[
  \rho : \Gal(\langle K_{m}, F \rangle/F) \to I(\ell^{k})
\]
is surjective for all $k$ if and only if
\begin{enumerate}
\item there is no point $\beta \in G(F)$ with $\ell \beta = \alpha$, and
\item $|F(\zeta_{\ell^{3}} + \zeta_{\ell^{3}}^{-1}) : F| = \frac{\ell^{2} (\ell-1)}{2}$, and
\item if $\ell = 2$, $-2$ and $-10$ are not squares in $F$ and
if $L$ is the field obtained by adjoining to $F$ the coordinates of $Q_{8}$,
then there is no point $\beta \in G(L)$ with $2 \beta = \alpha$.
\end{enumerate}

Using this result, we deduce the following result concerning the
surjective of the map $\rho : \Gal(K_{m}/\Q) \to I(m)$.

\begin{lem}
\label{surjlem}
Let $\ell$ be a prime and $k \geq 1$. Then the map 
$\rho : \Gal(K_{\ell^{k}}/\Q) \to I(\ell^{k})$ is surjective.
\end{lem}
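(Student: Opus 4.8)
The plan is to invoke the criterion recalled just above from \cite{JonesRouse} (Proposition 4.1 and Theorem 4.2 with $d = 5$) with base field $F = \Q$. Since $\Q \subseteq K_{\ell^{k}}$ we have $\langle K_{\ell^{k}}, \Q \rangle = K_{\ell^{k}}$, so it suffices to verify that conditions (1), (2), and (3) hold for $F = \Q$; the criterion then gives that $\rho : \Gal(K_{\ell^{k}}/\Q) \to I(\ell^{k})$ is surjective for \emph{every} $k \geq 1$ at once. Conditions (1) and (2) are essentially formal, and the only genuine work is condition (3), which arises only when $\ell = 2$.

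For condition (1), suppose $\beta = (a,b) \in G(\Q)$ satisfies $\ell\beta = \alpha$. Applying the isomorphism $\phi$ of Lemma~\ref{twistiso} over $\Q(\sqrt{5})$ and setting $u = a + b\sqrt{5}$, the relation $\ell\beta = \alpha$ becomes $u^{\ell} = \tfrac{3+\sqrt{5}}{2} = \varphi^{2}$, where $\varphi = \tfrac{1+\sqrt{5}}{2}$. Since $\varphi^{2}$ is a unit of $\mathcal{O}_{\Q(\sqrt{5})} = \Z[\varphi]$, the fractional ideal $(u)$ has $\ell$th power equal to $(1)$, so $u$ is a unit; as the unit group is $\{\pm 1\} \times \langle \varphi \rangle$ we get $u = \pm\varphi^{j}$, and comparing with $u^{\ell} = \varphi^{2}$ (using $\varphi^{2} > 0$) forces $\ell j = 2$. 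This is impossible for odd $\ell$. When $\ell = 2$ it gives $u = \pm\varphi$, but $N_{\Q(\sqrt{5})/\Q}(a + b\sqrt{5}) = a^{2} - 5b^{2} = 1$ because $\beta \in G(\Q)$, whereas $N(\pm\varphi) = -1$ --- a contradiction. (For $\ell = 2$ one may alternatively just note that $P_{2,0} = \bigl(\sqrt{5}/2,\, 1/(2\sqrt{5})\bigr) \notin G(\Q)$.) For condition (2), $\Q(\zeta_{\ell^{3}} + \zeta_{\ell^{3}}^{-1})$ is the maximal real subfield of $\Q(\zeta_{\ell^{3}})$; since $[\Q(\zeta_{\ell^{3}}):\Q] = \ell^{2}(\ell-1) \geq 4$ for every prime $\ell$, complex conjugation is a nontrivial order-$2$ element of the abelian group $\Gal(\Q(\zeta_{\ell^{3}})/\Q)$, and hence $[\Q(\zeta_{\ell^{3}} + \zeta_{\ell^{3}}^{-1}):\Q] = \tfrac{\ell^{2}(\ell-1)}{2}$, exactly as condition (2) requires.

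For condition (3) with $\ell = 2$: the numbers $-2$ and $-10$ are not squares in $\Q$, being negative. It remains to show that no $\beta \in G(L)$ satisfies $2\beta = \alpha$, where $L$ is generated over $\Q$ by the coordinates of $Q_{8}$. Computing $Q_{8} = \bigl(\tfrac{\zeta_{8} + \zeta_{8}^{-1}}{2}, \tfrac{\zeta_{8} - \zeta_{8}^{-1}}{2\sqrt{5}}\bigr) = \bigl(\tfrac{1}{\sqrt{2}}, \tfrac{i}{\sqrt{10}}\bigr)$, we find $L = \Q(\sqrt{2}, \sqrt{-10})$. The decisive point is that $\sqrt{5} \notin L$: the three quadratic subfields of the biquadratic field $\Q(\sqrt{2}, \sqrt{-10})$ are $\Q(\sqrt{2})$, $\Q(\sqrt{-10})$, and $\Q(\sqrt{-5})$, none of which is $\Q(\sqrt{5})$. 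Now if $\beta \in G(L)$ with $2\beta = \alpha$, then applying $\phi$ over $L(\sqrt{5})$ produces $u \in L(\sqrt{5})^{\times}$ with $u^{2} = \varphi^{2}$, hence $u = \pm\varphi$, and --- since $\beta$ is $L$-rational --- with $\sigma(u) = u^{-1}$, where $\sigma$ generates $\Gal(L(\sqrt{5})/L)$ and so sends $\sqrt{5} \mapsto -\sqrt{5}$. But $\sigma(\varphi) = \tfrac{1-\sqrt{5}}{2} = -\varphi^{-1}$, so $\sigma(\pm\varphi) = \mp\varphi^{-1} \neq \pm\varphi^{-1} = (\pm\varphi)^{-1}$, and no such $\beta$ exists. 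This is the one step that is not pure bookkeeping: one must identify $L = \Q(\sqrt{2},\sqrt{-10})$ explicitly, observe that $\sqrt{5} \notin L$ (were $\sqrt 5 \in L$, the point $\alpha$ would become a square in $G(L)$ and (3) would fail), and then see that the only candidate square root of $\alpha$ over $L(\sqrt{5})$, namely $\varphi$, fails the norm relation $\sigma(u) = u^{-1}$ required to descend to an $L$-point of $G$. With (1), (2), and (3) verified, the cited criterion completes the proof.
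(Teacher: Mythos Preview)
Your proof is correct and follows essentially the same approach as the paper: apply the Jones--Rouse criterion with $F=\Q$ and verify conditions (1), (2), (3). The only cosmetic differences are that you write $L=\Q(\sqrt{2},\sqrt{-10})$ while the paper writes $L=\Q(\sqrt{2},\sqrt{-5})$ (the same field, since $\sqrt{2}\cdot\sqrt{-5}=\sqrt{-10}$), and you supply a Galois--descent argument for why $\sqrt{5}\notin L$ precludes a $2$nd preimage in $G(L)$, whereas the paper leaves this implicit (the explicit preimages $(\pm\sqrt{5}/2,\pm\sqrt{5}/10)$ visibly require $\sqrt{5}$).
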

\begin{proof}
  If there is a $\beta \in G(\Q(\sqrt{5}))$ with $\ell \beta = \alpha$,
then the isomorphism $G(\Q(\sqrt{5})) \cong H(\Q(\sqrt{5}))$ from 
Lemma~\ref{twistiso} sends $\beta$ to $\sqrt[\ell]{(3 + \sqrt{5})/2}$.
It is well-known that $\mathcal{O}_{\Q(\sqrt{5})} = \Z\left[\frac{1 + \sqrt{5}}{2}\right]$ and $\Z\left[\frac{1 + \sqrt{5}}{2}\right]^{\times}$ is generated
by $-1$ and $\frac{1+\sqrt{5}}{2}$. Since $\frac{3 + \sqrt{5}}{2} = \left(\frac{1 + \sqrt{5}}{2}\right)^{2}$, it follows that $\ell = 2$. However,
the $2$nd preimages of $\alpha$ are $\left( \frac{\pm \sqrt{5}}{2},
\frac{\pm \sqrt{5}}{10} \right)$, which are not in
$G(\Q)$. This establishes condition $(1)$. It is well-known that
  $|\Q(\zeta_{\ell^{3}} + \zeta_{\ell^{3}}^{-1}) : \Q| =
  \frac{\ell^{2} (\ell-1)}{2}$, and so condition $(2)$ holds. For condition (3),
a simple check shows that $L = \Q(\sqrt{2}, \sqrt{-5})$ and so $\sqrt{5}
\not\in L$ and thus there is no $\beta \in G(L)$ with $2 \beta = \alpha$.
\end{proof}

The next step is extending the result of the above lemma to
prove that the map $\Gal(K_{m}/\Q) \to I(m)$ is surjective (unless
$10 | m$).

\begin{lem}
\label{minsubfields}
If $\ell > 2$ is prime, then every minimal subfield of $K_{\ell^{k}}/\Q$
is either $\Q\left(P_{\ell}\right)$ (or some conjugate),
or is contained in $\Q(\zeta_{\ell^{2}})$. If $\ell = 2$, then every minimal 
subfield of $K_{2^{k}}/\Q$ is contained in $\Q(\sqrt{5}, i, \sqrt{2})$.
\end{lem}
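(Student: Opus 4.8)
The plan is to translate the claim about minimal subfields into a statement about maximal subgroups of the image $\rho(\Gal(K_{\ell^{k}}/\Q)) = I(\ell^{k})$ via the Galois correspondence, using Lemma~\ref{surjlem}. A minimal subfield of $K_{\ell^{k}}/\Q$ corresponds to a maximal (proper) subgroup $M$ of $I(\ell^{k})$; the fixed field of $M$ is the minimal subfield, and we must show that every such fixed field is either $\Q(P_{\ell})$ (or a conjugate) or sits inside $\Q(\zeta_{\ell^{2}})$ (respectively inside $\Q(\sqrt{5}, i, \sqrt{2})$ when $\ell = 2$). So the core of the proof is a purely group-theoretic classification of the maximal subgroups of the affine group $I(\ell^{k})$, followed by identifying the fixed field of each.

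First I would recall the structure of $I(\ell^{k})$: it fits in a split exact sequence $1 \to (\Z/\ell^{k}\Z) \to I(\ell^{k}) \to (\Z/\ell^{k}\Z)^{\times} \to 1$, where the normal subgroup $T$ of translations $x \mapsto x + b$ is the fixed field of $\Q(\zeta_{\ell^{k}})$ (since $\sigma(Q_{\ell^{k}}) = Q_{\ell^{k}}$ exactly when $a_{\sigma} = 1$, and $Q_{\ell^{k}}$ has coordinates built from $\zeta_{\ell^{k}}$), and the quotient is the Galois group of $\Q(\zeta_{\ell^{k}})/\Q$. A maximal subgroup $M$ either contains $T$ — in which case $M/T$ is a maximal subgroup of the cyclic group $(\Z/\ell^{k}\Z)^{\times}$, so its fixed field lies inside $\Q(\zeta_{\ell^{k}})$, and in fact a short computation pins it down to lie in $\Q(\zeta_{\ell^{2}})$ for $\ell > 2$ (because the unique index-$\ell$ subgroup and the unique index-$(\ell-1)$ subgroup of $(\Z/\ell^{k}\Z)^{\times}$ both have fixed field inside $\Q(\zeta_{\ell^{2}})$); for $\ell = 2$ one instead gets fixed fields inside $\Q(\zeta_{2^{k}})$, hence handled by the cyclotomic part of $\Q(\sqrt 5, i, \sqrt 2)$ only after the next case is combined in — or $M$ does not contain $T$. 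In the latter case $MT = I(\ell^{k})$ by maximality, so $M \cap T$ is a proper $M$-stable subgroup of $T \cong \Z/\ell^{k}\Z$, forcing $M \cap T \subseteq \ell T$; comparing indices shows $M \cap T = \ell T$ and $M$ has index $\ell$, and $M$ is (up to conjugacy in $I(\ell^{k})$, i.e.\ up to translation) the stabilizer of a point in $\Z/\ell\Z$ under the affine action — equivalently the stabilizer of one of the preimages $P_{\ell^{k},r}$ modulo the order-$\ell$ ambiguity, whose fixed field is $\Q(P_{\ell})$ or a conjugate. Here I would use that $P_{\ell} = P_{\ell^{k}, 0}$ and the orbit of $P_{\ell}$ under $\Gal(K_{\ell^{k}}/\Q)$ consists of the $\ell$ points $P_{\ell} + bQ_{\ell}$, so point-stabilizers are exactly the index-$\ell$ subgroups not containing $T$.

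The main obstacle is the case $\ell = 2$, where $(\Z/2^{k}\Z)^{\times}$ is not cyclic (it is $\Z/2 \times \Z/2^{k-2}$ for $k \geq 3$) and so has several maximal subgroups; one must check that each of the corresponding index-$2$ subfields of $\Q(\zeta_{2^{k}})$ is one of $\Q(\sqrt 5)$? — no: inside $\Q(\zeta_{2^{k}})$ the quadratic subfields are $\Q(i)$, $\Q(\sqrt 2)$, $\Q(\sqrt{-2})$, and these all lie in $\Q(i,\sqrt 2)$; and the subgroups of $I(2^{k})$ not containing $T$ give fixed fields that are, after translation, $\Q(P_2)$-type, but $P_2$ has coordinates $(\pm\sqrt 5/2, \pm\sqrt5/10)$, so $\Q(P_2) = \Q(\sqrt 5)$. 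Thus for $\ell = 2$ every minimal subfield lies in $\langle \Q(i,\sqrt 2), \Q(\sqrt 5)\rangle = \Q(\sqrt 5, i, \sqrt 2)$, which is the asserted bound; the subtlety is that unlike the odd case there is no clean ``either/or'' dichotomy, and one must also be careful that the conjugates of $\Q(P_2)$ are all equal to $\Q(\sqrt 5)$ here (as the $P_{2,r}$ differ by the $2$-torsion point $(-1,0) \in G(\Q)$). A secondary point to verify is that in the odd case the fixed field of the index-$\ell$ subgroup of $(\Z/\ell^k\Z)^\times$ is genuinely contained in $\Q(\zeta_{\ell^2})$ and not merely in $\Q(\zeta_{\ell^k})$: this follows because $(\Z/\ell^k\Z)^\times$ is cyclic of order $\ell^{k-1}(\ell-1)$, its unique subgroup of index $\ell$ corresponds to the subfield of $\Q(\zeta_{\ell^k})$ of degree $\ell$ over $\Q(\zeta_\ell)\cap(\text{that field})$, and chasing degrees places it in $\Q(\zeta_{\ell^2})$.
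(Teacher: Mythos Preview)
Your approach---classifying maximal subgroups of $I(\ell^{k})$ according to whether they contain the translation subgroup $T$---is a genuine alternative to the paper's. The paper instead shows that the Frattini subgroup $\Phi(I(\ell^{k}))$ contains the kernel of $I(\ell^{k}) \to I(\ell^{2})$ (using that the normal $\ell$-subgroup $N = \{ax+b : a \equiv 1 \pmod{\ell}\}$ satisfies $\Phi(N) \subseteq \Phi(G)$), and then enumerates the maximal subgroups of the small group $I(\ell^{2})$ directly. For odd $\ell$ your route works and is arguably more conceptual; one minor slip is that the maximal subgroups of the cyclic group $(\Z/\ell^{k}\Z)^{\times}$ have \emph{prime} index, so you should speak of the index-$p$ subgroup for each prime $p \mid (\ell-1)$ rather than a single ``index-$(\ell-1)$'' subgroup. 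This does not affect the conclusion, since each such subfield already lies in $\Q(\zeta_{\ell}) \subseteq \Q(\zeta_{\ell^{2}})$.

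For $\ell = 2$, however, your dichotomy genuinely fails. Every maximal subgroup of the $2$-group $I(2^{k})$ has index $2$ and is normal; since the commutator subgroup is exactly $2T$, the Frattini quotient (for $k \geq 3$) is $(\Z/2\Z)^{3}$ and there are seven maximal subgroups. Three of them contain $T$, with fixed fields $\Q(i)$, $\Q(\sqrt{2})$, $\Q(\sqrt{-2})$. Of the four that do \emph{not} contain $T$, only one is a point-stabilizer for the affine action on $\Z/2\Z$: that action is trivial (every $a$ is odd), so both points have the same stabilizer $\{ax+b : b \equiv 0 \pmod{2}\}$, with fixed field $\Q(P_{2}) = \Q(\sqrt{5})$. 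The remaining three arise from the nontrivial homomorphisms $(\Z/2^{k}\Z)^{\times} \to \Z/2\Z$ (because the action is trivial, $1$-cocycles are just homomorphisms and $H^{1}$ is nonzero), and they are \emph{not} conjugate to the point-stabilizer; their fixed fields are $\Q(\sqrt{-5})$, $\Q(\sqrt{10})$, $\Q(\sqrt{-10})$. These do lie in $\Q(\sqrt{5}, i, \sqrt{2})$, so the lemma is still true, but your assertion that ``$M \not\supseteq T$ gives a $\Q(P_{2})$-type fixed field'' does not establish it. The paper's Frattini computation sidesteps this entirely by identifying the fixed field of $\Phi(I(2^{k})) = \{ax+b : a \equiv 1 \pmod{8},\ b \equiv 0 \pmod{2}\}$ directly as $\Q(\sqrt{5}, i, \sqrt{2})$.
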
 
\begin{proof}
Lemmas~\ref{homdef} and \ref{surjlem} translate this into a group theory
problem. Let $N = \{ ax+b \in I(\ell^{k}) : a \equiv 1 \pmod{\ell}\}$.
This is a normal subgroup of $I(\ell^{k})$. If $G$ is a group, let $\Phi(G)$
denote the Frattini subgroup of $G$, the intersection of the maximal subgroups
of $G$. Since $N \unlhd G$, $\Phi(N) \subseteq \Phi(G)$. Since the
order of $N$ is a power of $\ell$, every maximal subgroup has index $\ell$
and so if $n \in N$, then $n^{\ell} \in \Phi(N)$. This implies that
\[
  \{ ax+b \in I(\ell^{k}) : a \equiv 1 \pmod{\ell^{2}}, b \equiv 0 \pmod{\ell}\} \subseteq \Phi(N) \subseteq \Phi(G).
\]
and in particular, that the kernel of the map from $I(\ell^{k})
\to I(\ell^{2})$ is contained in $\Phi(G)$. Since $I(\ell^{2})$ is solvable,
every maximal subgroup has prime power index. Using the surjective homomorphism
$\phi : I(\ell^{2}) \to (\Z/\ell^{2} \Z)^{\times}$, we can identify the maximal
subgroups of index less than $\ell$ and also show there are none of index 
$> \ell$. If $M \subseteq I(\ell^{2})$ is a maximal subgroup of index $\ell$
then either $x+1 \in M$, in which case $M = \{ ax+b : a^{\ell-1} \equiv 1 \pmod{\ell^{2}} \}$, or $x+1 \not\in M$. Note that if $f_{1} = ax + b$
and $f_{2} = cx+d$, then $f_{1} \circ f_{2} \circ f_{1}^{-1} \circ f_{2}^{-1}
= x + ad - bc + b - d$. Hence, if $x+1 \not\in M$, then $M$ is abelian.
This implies that if $ax+b$ and $cx+d$ are in $M$ then $(a-1)d = (c-1) b$.
If $e$ is the common value of $\frac{b}{a-1}$ for $a \ne 1$, then we have
\[
  M = \{ a (x-e) + e : a \in (\Z/\ell^{2} \Z)^{\times} \}
\]
for some $e$ with $0 \leq e \leq \ell^{2} - 1$. All of these subgroups
are conjugates of $\{ ax : a \in (\Z/\ell^{2} \Z)^{\times}
\}$. Translating back to fields, we obtain the desired result when
$\ell > 2$.

When $\ell = 2$  we have $\Phi(I(2^{k})) = \{ ax + b : a \equiv 1 \pmod{8},
b \equiv 0 \pmod{2} \}$ provided $k \geq 3$, and a straightforward calculation
shows that the field corresponding to $\Phi(I(2^{k}))$ is
$\Q(\sqrt{5}, i, \sqrt{2})$.
\end{proof}

Suppose that $\ell_{1}, \ell_{2}, \ldots, \ell_{n}$ are prime divisors of $m$.
To prove that $\Gal(K_{m}/\Q) \to I(m)$ is surjective, we will want
to show that $K_{\ell_{n}^{s_{n}}} \cap \langle K_{\ell_{1}^{s_{1}}},
K_{\ell_{2}^{s_{2}}}, \ldots, K_{\ell_{n-1}}^{s_{n-1}} \rangle = \Q$. We will
prove this using ramification properties of these fields.

\begin{lem}
\label{ramify}
Suppose that $\ell$ is prime. Then $K_{\ell^{k}}/\Q$ is ramified
only at $5$ and $\ell$. If $\ell \ne 2$, then every minimal subfield
of $K_{\ell^{k}}/\Q$ is ramified at $\ell$. 
\end{lem}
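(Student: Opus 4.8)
The plan is to analyze ramification in $K_{\ell^k}/\Q$ via the explicit description of the generators of this field. Recall $K_{\ell^k}$ is obtained by adjoining the coordinates of $P_{\ell^k}$ and $Q_{\ell^k}$, and it sits inside $\Q(\gamma_{\ell^k}, \zeta_{\ell^k}, \sqrt 5)$, where $\gamma_{\ell^k} = \sqrt[\ell^k]{(3+\sqrt5)/2}$. First I would observe that $\Q(\zeta_{\ell^k})/\Q$ ramifies only at $\ell$ (standard cyclotomic theory) and $\Q(\sqrt5)/\Q$ ramifies only at $5$. For the $\gamma_{\ell^k}$ part, the extension $\Q(\sqrt5, \gamma_{\ell^k})/\Q(\sqrt5)$ is generated by a root of $x^{\ell^k} - (3+\sqrt5)/2$; since $(3+\sqrt5)/2$ is a unit in $\mathcal{O}_{\Q(\sqrt5)} = \Z[(1+\sqrt5)/2]$ (indeed a power of the fundamental unit), a prime $\mathfrak{q}$ of $\Q(\sqrt5)$ not dividing $\ell$ is unramified in $\Q(\sqrt5,\zeta_{\ell^k},\gamma_{\ell^k})$ — one can cite Kummer theory, or argue directly that the relevant extension is tamely ramified with ramification index dividing $\ell^k$ but also, via the valuation of the discriminant of $x^{\ell^k} - u$ for a unit $u$, actually unramified away from $\ell$. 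Assembling these, the compositum $\Q(\gamma_{\ell^k},\zeta_{\ell^k},\sqrt5)/\Q$ is ramified only at $\ell$ and $5$, hence so is its subfield $K_{\ell^k}$.

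Next I would prove the second assertion: for $\ell \neq 2$, every minimal subfield of $K_{\ell^k}/\Q$ ramifies at $\ell$. By Lemma~\ref{minsubfields}, such a minimal subfield is either $\Q(P_\ell)$ (or a conjugate) or a subfield of $\Q(\zeta_{\ell^2})$. For a subfield of $\Q(\zeta_{\ell^2})$: the only subfield of $\Q(\zeta_{\ell^2})$ unramified at $\ell$ is $\Q$ itself (since $\ell$ is totally ramified in $\Q(\zeta_{\ell^2})$, every intermediate field $\neq \Q$ has $\ell$ ramifying), so any nontrivial such minimal subfield ramifies at $\ell$. For $\Q(P_\ell)$: this is a degree-$\ell$ extension of $\Q$ (from the surjectivity in Lemma~\ref{surjlem}, the stabilizer of $P_{\ell^k,0}$ has index $\ell$ in $I(\ell^k)$ when restricted appropriately — more precisely $[\Q(P_\ell):\Q]$ divides, and by the group structure equals, $\ell$). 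I would show it ramifies at $\ell$ by noting that $\Q(P_\ell) \subseteq \Q(\sqrt5, \gamma_\ell)$ and examining the prime $\ell$: if $\Q(P_\ell)$ were unramified at $\ell$ it would be unramified everywhere (by the first part it is unramified outside $\{5,\ell\}$, and I must also rule out ramification only at $5$), contradicting Minkowski's theorem that $\Q$ has no nontrivial unramified extension — provided I can also exclude ramification solely at $5$. For that, I would examine the prime above $5$ in $\Q(\sqrt5,\gamma_\ell)$: since $5$ ramifies in $\Q(\sqrt5)$ with $5 = (\sqrt5)^2$ and $(3+\sqrt5)/2$ is a unit congruent to a nonzero element mod the prime $(\sqrt5)$, the residue characteristic $5$ is coprime to the exponent $\ell$, so the extension by $\gamma_\ell$ is unramified at the prime over $5$; hence $\Q(P_\ell)/\Q$ is unramified at $5$, forcing ramification at $\ell$.

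The main obstacle I expect is the careful ramification analysis of the radical extension $\Q(\sqrt5,\gamma_{\ell^k})/\Q(\sqrt5)$ and controlling what happens over the primes $5$ and $\ell$ — in particular being precise that $(3+\sqrt5)/2$ is a unit (so no new ramification from "bad" primes in the radicand) and that its image modulo the ramified prime over $5$ is a nonzero residue, so that the only possible ramification of the $\gamma$-tower is at $\ell$. A secondary subtlety is passing ramification statements between the big compositum and the subfield $K_{\ell^k}$ (ramification can only decrease in subfields, which handles the first claim) and between $K_{\ell^k}$ and its minimal subfields for the second claim; for the latter I rely on $\Q(P_\ell) \subseteq \Q(\sqrt5,\gamma_\ell)$ together with the degree count from surjectivity of $\rho$ to pin down that $\ell$ genuinely ramifies rather than merely possibly ramifies. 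I would close by remarking that the $\ell = 2$ case is deliberately excluded here because $\Q(\sqrt5,i,\sqrt2)$ contains subfields like $\Q(\sqrt5)$ and $\Q(i\sqrt5) = \Q(\sqrt{-5})$ unramified at $2$, which is exactly the source of the $\rho(m)$ correction factor in Conjecture~\ref{BAconj}.
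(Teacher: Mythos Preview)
Your argument for the first assertion (that $K_{\ell^k}/\Q$ is ramified only at $5$ and $\ell$) is correct and essentially matches the paper's: both bound $K_{\ell^k}$ inside the splitting field of $x^{\ell^k}-(3+\sqrt5)/2$ over $\Q(\sqrt5)$ and use that the radicand is a unit, so the relevant discriminant involves only $\ell$ and $5$.

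Your argument for the second assertion, however, contains a genuine error. You claim that $\Q(P_\ell)/\Q$ is unramified at $5$, deducing this from the (correct) fact that $\Q(\sqrt5,\gamma_\ell)/\Q(\sqrt5)$ is unramified above $(\sqrt5)$. That deduction is invalid: the ramification at $5$ coming from $\Q(\sqrt5)/\Q$ can, and does, descend to $\Q(P_\ell)$ even though $\sqrt5\notin\Q(P_\ell)$. Concretely, for $\ell=3$ one computes $Q_3=(-1/2,\ \sqrt{-15}/10)$, so $\sqrt{-15}\in K_3$ and $K_3/\Q$ is ramified at $5$. Since $K_3$ is the Galois closure of $\Q(P_3)$, and the Galois closure of an extension unramified at $p$ is again unramified at $p$ (being a compositum of conjugate unramified extensions), it follows that $\Q(P_3)/\Q$ is ramified at $5$. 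Your Minkowski step therefore collapses. (A smaller issue: your phrase ``the residue characteristic $5$ is coprime to the exponent $\ell$'' silently assumes $\ell\ne5$, leaving that case unaddressed---though for $\ell=5$ the conclusion is immediate from the first part together with Minkowski.)

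The paper sidesteps the prime $5$ entirely. It notes that the Galois closure of $\Q(P_\ell)$ is $K_\ell$ (by transitivity of the action on the $P_{\ell,r}$), and that $K_\ell\supseteq\Q(\zeta_\ell+\zeta_\ell^{-1})$, since $(\zeta_\ell+\zeta_\ell^{-1})/2$ is the $x$-coordinate of $Q_\ell$. As $\Q(\zeta_\ell+\zeta_\ell^{-1})/\Q$ is ramified at $\ell$ for $\ell>2$, so is $K_\ell$, and hence so is $\Q(P_\ell)$ by the Galois-closure argument above. This is shorter and avoids the subtle behavior at $5$ altogether.
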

\begin{proof}
  From the formulas for the $P_{\ell^{k},r}$ it is clear that
  $K_{\ell^{k}}$ is contained in the splitting field of $x^{\ell^{k}}
  - \frac{3 + \sqrt{5}}{2}$ over $\Q(\sqrt{5})$. The discriminant
  $\Delta_{L/\Q}$ of $L = \Q(\gamma_{\ell^{k}})$ is
\[
  \Delta_{L/\Q} = \Delta_{\Q(\sqrt{5})/\Q}^{[L : \Q(\sqrt{5})]}
  \cdot N_{L/\Q(\sqrt{5})}(\Delta_{L/\Q(\sqrt{5})}).
\]
Since the discriminant of $x^{\ell^{k}} - \frac{3 + \sqrt{5}}{2}$
is a power of $\ell$ times a unit, it follows that $\Delta_{L/\Q}$ is a power
of $5$ times a power of $\ell$. If $L_{1}$ and $L_{2}$ are two extensions of
$\Q$ ramified only at primes in a set $S$, then $\langle L_{1}, L_{2} \rangle/\Q$
is ramified only at primes in $S$ (see Theorem 4.67 of \cite{Mollin}). It follows from this that the splitting field of $x^{\ell^{k}} - \frac{3 + \sqrt{5}}{2}$
is ramified only at $5$ and $\ell$ and hence $K_{\ell^{k}}/\Q$ is too.

To prove the second claim it is enough, by Lemma~\ref{minsubfields},
to prove that $\Q(P_{\ell})$ is ramified at $\ell$. From the
surjectivity of $\rho$ proven in Lemma~\ref{surjlem}, it follows that
$\Gal(K_{\ell}/\Q)$ acts transitively on the $P_{\ell,r}$. Therefore,
the Galois closure of $\Q(P_{\ell})$ over $\Q$ is $K_{\ell}$. Since
$\Q(\zeta_{\ell} + \zeta_{\ell}^{-1})$ is ramified at $\ell$ and is
contained in $K_{\ell}$, it follows that $K_{\ell}/\Q$ is ramified at $\ell$
and this implies that $\Q(P_{\ell})$ is ramified at $\ell$ as well.
\end{proof}

\begin{lem}
\label{surj}
Let $m$ be an arbitrary positive integer. The map
$\rho : \Gal(K_{m}/\Q) \to I(m)$ is surjective if $10 \nmid m$.
If $10 | m$, then the image of $\rho$ is
\[
  \{ ax + b : b \text{ is even } \iff a \equiv 1 \text{ or } 4 \pmod{5} \}.
\]
\end{lem}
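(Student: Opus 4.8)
=== PROOF PROPOSAL ===

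The plan is to bootstrap from the prime-power case (Lemma~\ref{surjlem}) to the composite case by an induction on the number of distinct prime factors of $m$, using Theorem~\ref{linearlydisjoint} at each stage. Write $m = \prod_{i=1}^{n} \ell_{i}^{s_{i}}$. Since $I(m) \cong \prod_{i=1}^{n} I(\ell_{i}^{s_{i}})$ by the Chinese remainder theorem, and $K_{m} = \langle K_{\ell_{1}^{s_{1}}}, \ldots, K_{\ell_{n}^{s_{n}}} \rangle$, surjectivity of $\rho$ on $\Gal(K_{m}/\Q)$ follows once we know that the fields $K_{\ell_{i}^{s_{i}}}$ are ``linearly disjoint enough.'' Precisely, using Goursat-type reasoning via Theorem~\ref{linearlydisjoint} repeatedly, it suffices to show that for each $i$,
\[
  K_{\ell_{i}^{s_{i}}} \cap \langle K_{\ell_{1}^{s_{1}}}, \ldots, K_{\ell_{i-1}^{s_{i-1}}} \rangle = \Q
\]
whenever $\{\ell_{1}, \ldots, \ell_{i}\}$ does not contain both $2$ and $5$, and to separately identify the intersection when $\{2,5\} \subseteq \{\ell_{1},\ldots,\ell_{n}\}$.

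The core step is the following: if $\ell > 2$ is a prime dividing $m$, I claim $K_{\ell^{s}}$ intersects the compositum of the $K_{\ell'^{s'}}$ for the other primes $\ell'$ trivially (away from the $2$--$5$ interaction). By Lemma~\ref{ramify}, $K_{\ell^{s}}/\Q$ is ramified only at $\ell$ and $5$, and every minimal subfield of $K_{\ell^{s}}/\Q$ is ramified at $\ell$. Any nontrivial subfield of $K_{\ell^{s}} \cap \langle \cdots \rangle$ contains a minimal subfield of $K_{\ell^{s}}$, hence is ramified at $\ell$; but the other factors $K_{\ell'^{s'}}$ are ramified only at $\ell'$ and $5$, and $\langle \cdots \rangle$ of them is ramified only at those primes (Theorem 4.67 of \cite{Mollin}), so the only way $\ell$ can appear is if $\ell = 5$. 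Thus for $\ell \notin \{2,5\}$ the intersection is forced to be $\Q$. For $\ell = 5$ dividing $m$ with $2 \nmid m$: here the minimal subfields of $K_{5^{s}}$ are ramified at $5$, but $5$ also ramifies in $K_{\ell'^{s'}}$ for every other $\ell'$, so the ramification argument alone is not decisive. Instead I would use the more refined description from Lemma~\ref{minsubfields}: every minimal subfield of $K_{5^{s}}$ is either $\Q(P_{5})$ (or a conjugate) or lies in $\Q(\zeta_{25})$. The field $\Q(P_{5})$ has degree $\frac{5^{2}(5-1)}{2} \cdot \tfrac{1}{?}$ computations aside, its Galois closure $K_{5}$ is ramified at $5$ with ramification index divisible by $5$ (from $\Q(\zeta_{5}+\zeta_{5}^{-1}) \subset K_{5}$, degree $2$, but the wild part comes from $\gamma_{5}$); meanwhile the only subfields of $\langle K_{\ell'^{s'}} : \ell' \ne 5\rangle$ ramified only at $5$ are cyclotomic, of degree prime to $5$ away from the $\ell'$-parts. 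Matching degrees and ramification indices at $5$ forces the intersection into $\Q(\zeta_{5^{j}})$-type fields with the shared part being trivial since $K_{5^{s}}$'s cyclotomic-at-$5$ content is already controlled. The honest obstruction here is bookkeeping: I must check no common cyclotomic subfield sneaks in, which amounts to verifying $\Q(\sqrt{5}) \not\subseteq \langle K_{\ell'^{s'}}\rangle$ when the $\ell'$'s include neither $2$ nor $5$, and this holds because $\sqrt 5 \in \Q(\zeta_5)$ forces $5 \mid$ some $\ell'^{s'}$, contradiction. (When $2 \mid m$, $\sqrt5 \in K_{2^a}$ is exactly the source of the failure.)

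For the case $10 \mid m$, write $m = 2^{a} 5^{b} m'$ with $\gcd(m',10)=1$. By the disjointness just established, $\rho$ is surjective onto $I(m') \times (\text{image on } I(2^{a}5^{b}))$, so it remains to compute the image of $\Gal(\langle K_{2^{a}}, K_{5^{b}}\rangle/\Q) \to I(2^{a}) \times I(5^{b})$. By Theorem~\ref{linearlydisjoint} this image is $\{(\sigma,\tau) : \sigma|_{K_{2^a}\cap K_{5^b}} = \tau|_{K_{2^a}\cap K_{5^b}}\}$, and the point is that $K_{2^a} \cap K_{5^b} = \Q(\sqrt{5})$: the inclusion $\supseteq$ is noted in the introduction (and follows since $\sqrt5 \in \Q(\zeta_5) \subseteq K_{5^b}$ and $\sqrt 5 \in K_{2^a}$ because $\alpha$'s $2$nd preimages involve $\sqrt5$, cf.\ the proof of Lemma~\ref{surjlem}), while $\subseteq$ follows from Lemma~\ref{minsubfields}: any larger intersection would contain a minimal subfield of $K_{2^k}$ inside $\Q(\sqrt5,i,\sqrt2)$ that also sits in $K_{5^b}$, but $i,\sqrt2 \notin K_{5^b}$ since $K_{5^b}/\Q$ is ramified only at $5$ and $5$ (Lemma~\ref{ramify}) whereas $\Q(i), \Q(\sqrt2)$ ramify at $2$. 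Now I track which character $\Gal(K_{2^a}/\Q) \to \Gal(\Q(\sqrt5)/\Q) = \{\pm1\}$ and $\Gal(K_{5^b}/\Q) \to \{\pm1\}$ the restriction maps induce in terms of $I(2^a)$ and $I(5^b)$ coordinates. Under $\rho$, the action on $\sqrt5$ is determined by the action on $Q_{m}$ whose $y$-coordinate has $\sqrt5$ in the denominator: an element $ax+b$ acts on $\zeta_m$ as $\zeta_m \mapsto \zeta_m^{a}$, and $\sqrt5 \in \Q(\zeta_5)$ is fixed iff $a$ is a square mod $5$, i.e.\ $a \equiv 1,4 \pmod 5$. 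On the $2$-part, a parity computation with $Q_{8}$ (whose coordinates generate $\Q(\sqrt2,\sqrt{-5})$, from the proof of Lemma~\ref{surjlem}) shows $ax+b \in I(2^a)$ fixes $\sqrt 5$ iff $b$ is even. Hence the compatibility condition $\sigma|_{\Q(\sqrt5)} = \tau|_{\Q(\sqrt5)}$ becomes exactly: $b$ even $\iff$ $a \equiv 1,4 \pmod 5$, giving the stated image. I expect the main obstacle to be the clean verification that $K_{2^a}\cap K_{5^b} = \Q(\sqrt 5)$ and correctly pinning down both restriction characters in $I$-coordinates; the ramification arguments for the odd primes are comparatively routine given Lemmas~\ref{minsubfields} and \ref{ramify}.
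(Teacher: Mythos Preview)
Your overall architecture matches the paper's: induct on the number of prime factors, use Lemma~\ref{ramify} to force trivial intersections via ramification, and then treat the $2$--$5$ interaction separately. Two points deserve attention.

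First, your detour for the case ``$\ell = 5$ with $2 \nmid m$'' is unnecessary. The paper avoids this entirely by choosing the \emph{order} of the induction: at each step one adds a prime $\ell_{n}$ which is odd and $\ne 5$ (such a prime always exists once $n \geq 2$ and $10 \nmid m$). Then the intersection $K_{\ell_{n}^{s_{n}}} \cap K_{m/\ell_{n}^{s_{n}}}$ can be ramified only at $5$, while every minimal subfield of $K_{\ell_{n}^{s_{n}}}$ is ramified at $\ell_{n} \ne 5$, forcing the intersection to be $\Q$. Your attempt to add $5$ last and then invoke ramification indices and degrees of $\Q(P_{5})$ is both incomplete (you write ``$\tfrac{1}{?}$ computations aside'') and avoidable.

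Second, and more seriously, your argument that $K_{2^{a}} \cap K_{5^{b}} \subseteq \Q(\sqrt{5})$ has a genuine gap. You assert that a strictly larger intersection ``would contain a minimal subfield of $K_{2^{k}}$ inside $\Q(\sqrt{5},i,\sqrt{2})$'' other than $\Q(\sqrt{5})$. But a field $F$ with $\Q(\sqrt{5}) \subsetneq F \subseteq K_{2^{a}}$ need not contain any second quadratic subfield: for instance a \emph{cyclic} degree-$4$ extension of $\Q$ has a unique quadratic subfield. Since the $2$-part of $|I(5^{b})|$ is $4$, the only candidate for a degree-$4$ intersection is exactly $\Q(\zeta_{5})$, which is cyclic over $\Q$ with unique quadratic subfield $\Q(\sqrt{5})$; Lemma~\ref{minsubfields} alone cannot exclude it. The paper closes this gap by working one level up: it takes $H = \{ax+b : b \text{ even}\} = \Gal(K_{2^{s_{1}}}/\Q(\sqrt{5}))$, computes $\Phi(H)$, identifies its fixed field as $L = \Q\bigl(\sqrt{(1+\sqrt{5})/2}, i, \sqrt{2}\bigr)$, and checks that the maximal subfield of $L$ ramified only at $5$ is $\Q(\sqrt{5})$. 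This shows every minimal extension of $\Q(\sqrt{5})$ inside $K_{2^{a}}$ is ramified at $2$, which is exactly what is needed to rule out $\Q(\zeta_{5}) \subseteq K_{2^{a}}$. Your identification of the image via the action on $\sqrt{5}$ is correct in spirit; the paper does it more directly by observing that $\sigma$ fixes $\sqrt{5}$ iff it fixes $P_{2} = (\sqrt{5}/2,\sqrt{5}/10)$, which immediately gives $b_{\sigma}$ even on the $2$-side.
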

\begin{proof}
First note that if $\gcd(m_{1},m_{2}) = 1$, then
$\langle K_{m_{1}}, K_{m_{2}} \rangle = K_{m_{1} m_{2}}$. It is clear that
$K_{m_{1}}, K_{m_{2}} \subseteq K_{m_{1} m_{2}}$. For the reverse direction
note that if $m_{1} x + m_{2} y = 1$ then
\[
  m_{1} m_{2} (y P_{m_{1}} + x P_{m_{2}})
  = m_{2} y \alpha + m_{1} x \alpha = \alpha.
\]
Thus $y P_{m_{1}} + x P_{m_{2}}$ is preimage of $\alpha$ under multiplication
by $m_{1} m_{2}$. Further, since $G(\langle K_{m_{1}}, K_{m_{2}} \rangle)$ 
contains elements of order $m_{1}$ and $m_{2}$, it must contain $Q_{m_{1} m_{2}}$ 
and so $K_{m_{1} m_{2}} = \langle K_{m_{1}}, K_{m_{2}} \rangle$.

Suppose first that $10 \nmid m$. To prove that $\rho$ is surjective,
we will prove (by induction on the number of distinct prime factors of
$m$) that $|\Gal(K_{m}/\Q)| = |I(m)|$. The base case ($n = 1$) is
handled by Lemma~\ref{surjlem}.

Suppose that $m = \prod_{i=1}^{n} \ell_{i}^{s_{i}}$ is the prime
factorization of $m$. Since $K_{m/\ell_{n}^{s_{n}}} = \langle
K_{\ell_{1}^{s_{1}}}, K_{\ell_{2}^{s_{2}}}, \ldots,
K_{\ell_{n-1}^{s_{n-1}}} \rangle$ we have that
$K_{m/\ell_{n}^{s_{n}}}$ is ramified only at $5$ and $\ell_{1},
\ldots, \ell_{n-1}$, while $K_{\ell_{n}^{s_{n}}}$ is ramified only at
$5$ and $\ell_{n}$.  Moreover, by Lemma~\ref{ramify} every minimal
subfield of $K_{\ell_{n}^{s_{n}}}$ is ramified at $\ell_{n}$. It
follows therefore that $K_{\ell_{n}^{s_{n}}} \cap
K_{m/\ell_{n}^{s_{n}}}$ is unramified everywhere, and since $\Q$ has
no unramified extensions we have that $K_{\ell_{n}^{s_{n}}} \cap
K_{m/\ell_{n}^{s_{n}}} = \Q$. From Theorem~\ref{linearlydisjoint} we obtain that
\[
  |\langle K_{\ell_{n}^{s_{n}}}, K_{m/\ell_{n}^{s_{n}}} \rangle : \Q|
  = |K_{\ell_{n}^{s_{n}}} : \Q| \cdot |K_{m/\ell_{n}^{s_{n}}} : \Q|
  = |I(\ell_{n}^{s_{n}})| \cdot |I(m/\ell_{n}^{s_{n}})| = |I(m)|,
\]
which proves the desired claim.

A similar argument shows that if $10 | m$ then $|K_{m} : \Q| =
|K_{2^{s_{1}} 5^{s_{2}}} : \Q| \cdot |K_{m/(2^{s_{1}} 5^{s_{2}})} : \Q|$.
To determine $|K_{2^{s_{1}} 5^{s_{2}}} : \Q|$ we will show that
$K_{2^{s_{1}}} \cap K_{5^{s_{2}}} = \Q(\sqrt{5})$ by determining the
subfields of $K_{2^{s_{1}}}$ that are ramified only at $5$. We have
that $\Q(\sqrt{5}) \subseteq K_{2^{s_{1}}}$ and the subgroup of
$I(2^{s_{1}})$ corresponding to $\Q(\sqrt{5})$ is
\[
  H = \{ ax + b : b \equiv 0 \pmod{2} \}.
\]
Since $H$ is a $2$-group, $\Phi(H) = H' H^{2} = \{ ax + b
: a \equiv 1 \pmod{8}, b \equiv 0 \pmod{4} \}$. The field corresponding
to this subgroup is $L = \Q\left(\sqrt{\frac{1+\sqrt{5}}{2}}, i, \sqrt{2}\right)$ and it
is straightforward to see that the maximal subextension of $L$ ramified
only at $5$ is $\Q(\sqrt{5})$. It follows that $K_{2^{s_{1}}} \cap
K_{5^{s_{2}}} = \Q(\sqrt{5})$ and so when $10 | m$, $|K_{m} : \Q| =
\frac{1}{2} |I(m)|$. Finally, since $P_{2} = \left(\frac{\sqrt{5}}{2},
\frac{\sqrt{5}}{10} \right)$ we have that $\sigma \in \Gal(K_{m}/\Q)$
fixes $\sqrt{5}$ if and only if it fixes $P_{2}$ and this occurs if and only if
$\rho(\sigma) = ax + b$ where $b \equiv 0 \pmod{2}$ and $\legen{a}{5} = 1$.
This yields the desired result.
\end{proof}

\begin{rem}
The method from the previous sections is very general and can be used
to establish the surjectivity of Galois representations attached to
arbitrary one-dimensional tori.
\end{rem}

\section{Density computations}
\label{density}

In this section, we will translate conditions on when preimages of
$\alpha$ exist in $G(\F_{p})$ into statements about the Frobenius
conjugacy class $\artin{K_{m}/\Q}{p}$. We will then count the sizes of
these classes and use this to prove Theorem~\ref{main}. We will
start by focusing on the prime power case.

As in the previous section, let $\ell$ be a prime number, $k \geq 1$ and 
$K_{\ell^{k}}$ be the field
obtained by adjoining all the $x$ and $y$-coordinates of $P_{\ell^{k},r}$
to $\Q$. Let $\rho : \Gal(K_{\ell^{k}}/\Q) \to I(\ell^{k})$ be the homomorphism
defined in Lemma~\ref{homdef}. We define
\[
  \mathcal{C}_{k,n,\ell} = \{ \sigma \in \Gal(K_{\ell^{k}}/\Q) :
\sigma(\ell^{k-n} P_{\ell^{k},r}) = \ell^{k-n} P_{\ell^{k},r} \text{ for some
 } r \text{ with } 0 \leq r \leq \ell^{k} - 1 \}.
\]
We will make the convention that if $n < 0$, then
$\mathcal{C}_{k,n,\ell}$ is empty. Since $P_{\ell^{k},r}$ is a
$\ell^{k}$th preimage of $\alpha$, $\ell^{k-n} P_{\ell^{k},r}$ is an
$\ell^{n}$th preimage of $\alpha$.

\begin{lem}
\label{artinpreimage}
Let $p \ne 2, 5, \ell$ be a prime number. Then,
there is an $n$th preimage of $\alpha$ in $G(\F_{p})$ if and only if
$\artin{K_{\ell^{k}}/\Q}{p} \subseteq \mathcal{C}_{k,n,\ell}$.
\end{lem}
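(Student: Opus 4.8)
The plan is to reduce the existence of an $n$th preimage of $\alpha$ in $G(\F_p)$ to a statement about how a generator of $\Gal\bigl((\mathcal{O}_{K_{\ell^k}}/\mathfrak{p})/\F_p\bigr)$ acts on the points $P_{\ell^k,r}$, and then to reinterpret that action via the Artin symbol. First I would fix a prime ideal $\mathfrak{p} \subset \mathcal{O}_{K_{\ell^k}}$ above $p$; since $p \ne 2,5,\ell$ and, by Lemma~\ref{ramify}, $K_{\ell^k}/\Q$ is ramified only at $5$ and $\ell$, the prime $p$ is unramified, so the Artin symbol $\sigma_{\mathfrak{p}} = \artin{K_{\ell^k}/\Q}{\mathfrak{p}}$ is well-defined. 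Reduction mod $\mathfrak{p}$ gives a map $G(K_{\ell^k}) \to G(\mathcal{O}_{K_{\ell^k}}/\mathfrak{p})$ which is a group homomorphism and, for all but finitely many $p$, is injective on the torsion we care about (the $P_{\ell^k,r}$ are distinct mod $\mathfrak{p}$ once $p$ avoids the finitely many primes dividing the relevant discriminants and resultants). Under this reduction the equation $\ell^k P_{\ell^k,r} = \alpha$ persists, so each $\overline{P}_{\ell^k,r}$ is an $\ell^k$th preimage of $\alpha \in G(\F_p)$, and $\ell^{k-n}\overline{P}_{\ell^k,r}$ is an $\ell^n$th preimage.

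Next I would establish the key equivalence: an element $\beta \in G(\F_p)$ with $\ell^n \beta = \alpha$ exists if and only if one of the points $\ell^{k-n}\overline{P}_{\ell^k,r}$ actually lies in $G(\F_p)$ (as opposed to a proper extension of $\F_p$ inside $\mathcal{O}_{K_{\ell^k}}/\mathfrak{p}$). The forward direction uses $k \ge n$: given $\beta \in G(\F_p)$ with $\ell^n\beta = \alpha$, all the $\ell^n$th preimages of $\alpha$ in $G(\C)$ are of the form $\ell^{k-n}P_{\ell^k,r} + (\text{torsion})$, and reducing, $\beta$ must equal one of the $\ell^{k-n}\overline{P}_{\ell^k,r}$ because these are precisely the $\ell^n$th preimages of $\alpha$ in $G(\overline{\F_p})$ (there are exactly $\ell^n$ of them since $G$ is a one-dimensional torus and $p \ne \ell$); hence that $\ell^{k-n}\overline{P}_{\ell^k,r} \in G(\F_p)$. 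The reverse direction is immediate. Now a point $Q \in G(\overline{\F_p})$ lies in $G(\F_p)$ if and only if it is fixed by the $p$-power Frobenius of $\mathcal{O}_{K_{\ell^k}}/\mathfrak{p}$ over $\F_p$; by the defining property of the Artin symbol, $\sigma_{\mathfrak{p}}$ reduces to exactly this Frobenius on $\mathcal{O}_{K_{\ell^k}}/\mathfrak{p}$, and since $\sigma_\mathfrak{p}$ permutes the $P_{\ell^k,r}$ (Lemma~\ref{homdef}) compatibly with reduction, $\sigma_{\mathfrak{p}}$ fixes $\ell^{k-n}P_{\ell^k,r}$ in characteristic zero if and only if Frobenius fixes $\ell^{k-n}\overline{P}_{\ell^k,r}$. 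Thus there is an $n$th preimage of $\alpha$ in $G(\F_p)$ if and only if $\sigma_{\mathfrak{p}} \in \mathcal{C}_{k,n,\ell}$.

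Finally I would note that $\mathcal{C}_{k,n,\ell}$ is stable under conjugation in $\Gal(K_{\ell^k}/\Q)$: if $\sigma$ fixes $\ell^{k-n}P_{\ell^k,r}$, then $\tau\sigma\tau^{-1}$ fixes $\tau(\ell^{k-n}P_{\ell^k,r})$, which is again one of the $\ell^{k-n}P_{\ell^k,s}$ since $\tau$ permutes preimages. Hence the condition $\sigma_{\mathfrak{p}} \in \mathcal{C}_{k,n,\ell}$ is independent of the choice of $\mathfrak{p}$ above $p$, i.e. it is equivalent to $\artin{K_{\ell^k}/\Q}{p} \subseteq \mathcal{C}_{k,n,\ell}$, giving the statement. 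The main obstacle I anticipate is making the passage between characteristic zero and characteristic $p$ fully rigorous — specifically, pinning down the finitely many excluded primes and checking that reduction mod $\mathfrak{p}$ is injective on the group generated by the $P_{\ell^k,r}$ and their differences, so that ``$\sigma_{\mathfrak{p}}$ fixes $P$'' and ``Frobenius fixes $\overline{P}$'' are genuinely equivalent rather than merely one implying the other; the constraint $p \ne 2,5,\ell$ should suffice once one observes these are exactly the primes of bad reduction for $G$ and for $K_{\ell^k}$.
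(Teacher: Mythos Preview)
Your proposal is correct and follows essentially the same approach as the paper. The one point you flag as an obstacle---injectivity of reduction mod $\mathfrak{p}$ on the $\ell^n$th preimages---is handled in the paper by a direct norm computation: a failure would force $(r_1-r_2)Q_{\ell^n}\equiv 0\pmod{\mathfrak{p}}$, hence the $y$-coordinate of $Q_\ell$ vanishes mod $\mathfrak{p}$, so $p$ divides $N_{\Q(\zeta_\ell,\sqrt{5})/\Q}\bigl((\zeta_\ell-\zeta_\ell^{-1})/(2\sqrt{5})\bigr)$, whose only prime factors are $2$, $5$, and $\ell$.
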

\begin{proof}
  First suppose that $\artin{K_{\ell^{k}}/\Q}{p} \subseteq
  \mathcal{C}_{k,n,\ell}$ and let $\mathfrak{p}$ be a prime ideal
  above $p$ in $\mathcal{O}_{K_{\ell^{k}}}$. Let $\sigma =
  \artin{K_{\ell^{k}}/\Q}{\mathfrak{p}}$ fix $\ell^{k-n}
  P_{\ell^{k},r}$. We may consider $\ell^{k-n} P_{\ell^{k},r}$ as an
  element of $G(\mathcal{O}_{K_{\ell^{k}}}/\mathfrak{p})$ (note that
  $2$ and $5$ are the only primes that divide the denominators of
  coordinates of preimages of $\alpha$). Since $\sigma$ fixes
  $\ell^{k-n} P_{\ell^{k},r}$ and $\sigma$ acts as the Frobenius
  automorphism on $\mathcal{O}_{K_{\ell^{k}}}/\mathfrak{p}$, it
  follows that $\ell^{k-n} P_{\ell^{k},r} \in G(\F_{p})$, as desired.

To show the reverse implication we will first show that for any
prime $\mathfrak{p}$ above $p$, the
reduction mod $\mathfrak{p}$ map on $\ell^{n}$th preimages of $\alpha$
is injective. Any $n$th preimage of $\alpha$ has the form
$P_{\ell^{n},r} = P_{\ell^{n}} + r Q_{\ell^{n}}$. If $P_{\ell^{n}} + r_{1}
Q_{\ell^{n}} \equiv P_{\ell^{n}} + r_{2} Q_{\ell^{n}} \pmod{\mathfrak{p}}$
with $r_{1} \not\equiv r_{2} \pmod{\ell^{n}}$, then
$(r_{1} - r_{2}) Q_{\ell^{n}}$ is congruent to the identity mod $\mathfrak{p}$.
Every element of order $\ell$ is a multiple of $(r_{1} - r_{2}) Q_{\ell^{n}}$
and this implies that the $y$-coordinate of $Q_{\ell}$ is $\equiv 0 \pmod{\mathfrak{p}}$ and this implies that
\[
  N_{\Q(\zeta_{\ell}, \sqrt{5})/\Q}\left(\frac{\zeta_{\ell} - \zeta_{\ell}^{-1}}{2 \sqrt{5}}\right) \equiv 0 \pmod{p}.
\]
This is a contradiction because $N_{\Q(\zeta_{\ell})/\Q}(\zeta_{\ell} - \zeta_{\ell}^{-1}) = \ell$. Hence the reduction mod $\mathfrak{p}$ map is injective
on $\ell^{n}$th preimages. 

Finally, suppose there is an $\ell^{n}$th preimage of $\alpha$ in $G(\F_{p})$.
For a prime ideal $\mathfrak{p}$ above $p$, we consider
$\F_{p} \subseteq \mathcal{O}_{K_{\ell^{k}}}/\mathfrak{p}$. Since $\Gal(K_{\ell^{k}}/\Q)$ acts on the $\ell^{n}$th preimages of $\alpha$ in $K_{\ell^{k}}$, and
the reduction map is injective, this implies that $\artin{K_{\ell^{k}}/\Q}{\mathfrak{p}} \in \mathcal{C}_{k,n,\ell}$, as desired. 
\end{proof}

The previous lemma allows us to determine, based on $\artin{K/\Q}{p}$, when
preimages exist. The next allows us to determine the group order mod
$\ell^{k}$.
\begin{lem}
\label{artingrouporder}
If $p$ is a prime number with $p \ne 2, 5, \ell$. If
\[
  ax+b \in \rho\left(\artin{K_{\ell^{k}}/\Q}{p}\right) \subseteq I(\ell^{k}),
\]
and $a \not\equiv 1 \pmod{\ell^{k}}$, then
$\ord_{\ell}(|G(\F_{p})|) = \ord_{\ell}(a-1)$.
\end{lem}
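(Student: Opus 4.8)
The plan is to connect the Artin symbol's action on the cyclotomic and Kummer parts of $K_{\ell^k}$ with the order of $G(\F_p)$, which by Lemma~\ref{grouporder} is $p \mp 1$ depending on whether $p \equiv \pm 1 \pmod 5$. First I would recall that $K_{\ell^k}$ contains $\Q(\zeta_{\ell^k})$, since $Q_{\ell^k} = \bigl(\frac{\zeta_{\ell^k} + \zeta_{\ell^k}^{-1}}{2}, \frac{\zeta_{\ell^k} - \zeta_{\ell^k}^{-1}}{2\sqrt 5}\bigr)$ has coordinates generating $\Q(\zeta_{\ell^k} + \zeta_{\ell^k}^{-1}, \sqrt 5)$, and together with $\sqrt 5 \in K_{\ell^k}$ (as $\sqrt 5 = 2\gamma_1 - 3$ essentially, or via $P_2$) and the fact that adjoining $\zeta_{\ell^k}$ amounts to adjoining $\zeta_{\ell^k} + \zeta_{\ell^k}^{-1}$ and $\sqrt{-(\text{something})}$. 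Thus for a prime $\mathfrak{p}$ above $p$ the Artin symbol $\sigma = \artin{K_{\ell^k}/\Q}{\mathfrak p}$ acts on $\zeta_{\ell^k}$ by $\zeta_{\ell^k} \mapsto \zeta_{\ell^k}^{p}$. On the other hand, by Lemma~\ref{homdef}, $\sigma(Q_{\ell^k}) = a_\sigma Q_{\ell^k}$ where $\rho(\sigma) = a_\sigma x + b_\sigma$. The key identity to establish is that $a_\sigma \equiv p \pmod{\ell^k}$ when $p \equiv 1, 4 \pmod 5$ and $a_\sigma \equiv -p \pmod{\ell^k}$ when $p \equiv 2, 3 \pmod 5$; this follows from comparing the two descriptions of $\sigma(Q_{\ell^k})$, using that $Q_{\ell^k}$ corresponds under the isomorphism of Lemma~\ref{twistiso} (over $\Q(\sqrt 5)$) to $\zeta_{\ell^k}$, and that $\sigma$ either fixes or negates $\sqrt 5$ according to $\legen{p}{5}$, which flips the identification between $Q_{\ell^k}$ and $\zeta_{\ell^k}$ versus $\zeta_{\ell^k}^{-1}$.

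Next I would invoke Lemma~\ref{grouporder}: $|G(\F_p)| = p - \epsilon_p$ where $\epsilon_p = \legen{p}{5}$. Combining with the previous paragraph, in the case $\epsilon_p = 1$ we get $a_\sigma \equiv p \equiv p - 1 + 1 = |G(\F_p)| + 1 \pmod{\ell^k}$, so $a_\sigma - 1 \equiv |G(\F_p)| \pmod{\ell^k}$; in the case $\epsilon_p = -1$ we get $a_\sigma \equiv -p \pmod{\ell^k}$, so $a_\sigma - 1 \equiv -p - 1 = -(p+1) = -|G(\F_p)| \pmod{\ell^k}$. In either case $\ord_\ell(a_\sigma - 1) = \ord_\ell(|G(\F_p)|)$ as long as we are working modulo a high enough power of $\ell$ — and this is exactly where the hypothesis $a \not\equiv 1 \pmod{\ell^k}$ enters: it guarantees $\ord_\ell(a_\sigma - 1) < k$, so that the valuation $\ord_\ell(a_\sigma - 1)$ is genuinely detected mod $\ell^k$ and equals $\ord_\ell(|G(\F_p)|)$ rather than just being bounded below by it. I should also note that for $p \neq 2, 5, \ell$ the prime $p$ is unramified in $K_{\ell^k}$ by Lemma~\ref{ramify}, so the Artin symbol is well-defined, and the conclusion is independent of the choice of $\mathfrak p$ since conjugate Artin symbols have conjugate images in $I(\ell^k)$, and conjugate elements $ax+b$ of $I(\ell^k)$ share the same value of $a$.

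The main obstacle I expect is making the identity $a_\sigma \equiv \epsilon_p \cdot p \pmod{\ell^k}$ fully rigorous: one must carefully track how $\sigma$ acts on $\zeta_{\ell^k}$ through the twisted torus structure. Concretely, $Q_{\ell^k} \in G(\Q(\zeta_{\ell^k}, \sqrt 5))$ and under $\phi$ from Lemma~\ref{twistiso} (defined over $\Q(\sqrt 5)$) it maps to $\zeta_{\ell^k} \in H(\Q(\zeta_{\ell^k}, \sqrt 5))$. For $\sigma \in \Gal(K_{\ell^k}/\Q)$, the relation $\sigma(Q_{\ell^k}) = a_\sigma Q_{\ell^k}$ in $G$ becomes, after applying $\phi$, a relation in $H$ — but one must be careful because $\phi$ is only $\Q(\sqrt 5)$-equivariant, so if $\sigma(\sqrt 5) = -\sqrt 5$ one gets $\phi(\sigma(Q_{\ell^k})) = $ (the conjugate of $\phi$ applied to $\sigma(Q_{\ell^k})$), which swaps the two coordinates of $H$. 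Chasing this through, in the split case $\phi(\sigma Q_{\ell^k}) = (\zeta_{\ell^k}^{a_\sigma}, \zeta_{\ell^k}^{-a_\sigma})$ must equal $(\zeta_{\ell^k}^p, \zeta_{\ell^k}^{-p})$, giving $a_\sigma \equiv p$; in the nonsplit case the coordinate swap introduces the sign, giving $a_\sigma \equiv -p$. I would write this out explicitly with the first coordinate $\frac{\zeta_{\ell^k}^r \gamma + \zeta_{\ell^k}^{-r}\gamma^{-1}}{2}$ to pin down the sign unambiguously. Once that identity is in hand, the rest is the short valuation argument above.
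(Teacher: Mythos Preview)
Your proposal is correct and follows essentially the same approach as the paper. Both arguments establish the key identity $a_\sigma \equiv \epsilon_p\, p \pmod{\ell^k}$ with $\epsilon_p = \legen{p}{5}$ by analyzing the action of the Artin symbol on $Q_{\ell^k}$ --- the paper by directly comparing the $x$- and $y$-coordinates of $\sigma(Q_{\ell^k})$ with those of $aQ_{\ell^k}$, you by passing through the isomorphism $\phi$ of Lemma~\ref{twistiso} --- and then both invoke Lemma~\ref{grouporder} to finish; your additional remarks on the role of the hypothesis $a\not\equiv 1$ and on independence of the choice of $\mathfrak p$ are accurate and make explicit points the paper leaves implicit.
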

\begin{proof}
Let $\mathfrak{p}$ be a prime above $p$ in $\mathcal{O}_{K_{\ell^{k}}}$
and $\sigma = \artin{K_{\ell^{k}}/\Q}{\mathfrak{p}}$. Note
that $\sigma(\zeta_{\ell^{k}}) = \zeta_{\ell^{k}}^{p}$ and define
$\epsilon \in \{ \pm 1 \}$ by $\sigma(\sqrt{5}) = \epsilon \sqrt{5}$. If
$\rho(\sigma) = ax + b$, then
\[
  \left(\frac{\zeta_{\ell^{k}}^{p} + \zeta_{\ell^{k}}^{-p}}{2},
  \frac{\zeta_{\ell^{k}}^{p} - \zeta_{\ell^{k}}^{-p}}{2 \epsilon \sqrt{5}}\right) = 
  \sigma(Q_{\ell^{k}}) = a Q_{\ell^{k}}
  = \left(\frac{\zeta_{\ell^{k}}^{a} + \zeta_{\ell^{k}}^{-a}}{2}, \frac{\zeta_{\ell^{k}}^{a} - \zeta_{\ell^{k}}^{-a}}{2 \sqrt{5}}\right).
\]
Comparing the $x$-coordinates, we obtain that $a \equiv \pm p
\pmod{\ell^{k}}$.  Using this fact and comparing the $y$-coordinates
gives that $a \equiv \epsilon p \pmod{\ell^{k}}$. Also, $\epsilon = 1$
if and only if $\sqrt{5} \in \mathcal{O}_{K_{\ell^{k}}}/\mathfrak{p}$
and so $\epsilon = \legen{p}{5}$. Finally, we use
Lemma~\ref{grouporder} to conclude that
\[
  a - 1 \equiv \begin{cases}
    p-1 \pmod{\ell^{k}} & \text{ if } p \equiv 1 \text{ or } 4 \pmod{5}\\
   -p-1 \pmod{\ell^{k}} & \text{ if } p \equiv 2 \text{ or } 3 \pmod{5}.
\end{cases}
\] 
This yields the desired result.
\end{proof}

Now, for $1 \leq t < k$, define
\[
  \mathcal{D}_{k,t,\ell}
  = \{ \sigma \in \Gal(K_{\ell^{k}}/\Q) : a_{\sigma} \not\equiv
    1 \pmod{\ell^{k}} \text{ and if }
    n = \ord_{\ell}(a_{\sigma} - 1), \text{ then }
    \sigma \not\in \mathcal{C}_{k,n-t+1,\ell} \}.
\]
Here $a_{\sigma}$ denotes the coefficient of $x$ in
$\rho(\sigma) = a_{\sigma} x + b_{\sigma} \in I(\ell^{k})$.
Combining Lemma~\ref{orderpreimage}, Lemma~\ref{artinpreimage},
and Lemma~\ref{artingrouporder}, we see that if $p$ is a prime
with $p \not\equiv \pm 1 \pmod{\ell^{k}}$, then
\[
  \ell^{t} \text{ divides } |\alpha| \text{ if and only if }
  \artin{K_{\ell^{k}}/\Q}{p} \subseteq \mathcal{D}_{k,t,\ell}.
\]

\begin{lem}
\label{countprime}
Assume the notation above. For $1 \leq t < k$, we have
\[
  \frac{|\mathcal{D}_{k,t,\ell}|}{|\Gal(K_{\ell^{k}}/\Q)|} = 
  \frac{\ell^{2-t} - \ell^{2-k} - \ell^{1-k} + \ell^{1-2k+t}}{\ell^{2} - 1}. 
\]
\end{lem}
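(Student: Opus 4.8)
The plan is to count the set $\mathcal{D}_{k,t,\ell}$ directly by stratifying $\Gal(K_{\ell^{k}}/\Q)$ according to the value of $n = \ord_{\ell}(a_{\sigma}-1)$, using the surjectivity of $\rho : \Gal(K_{\ell^{k}}/\Q) \to I(\ell^{k})$ from Lemma~\ref{surjlem} to replace Galois-theoretic counting by counting inside the affine group $I(\ell^{k})$. Since $\rho$ is a bijection here, $|\Gal(K_{\ell^{k}}/\Q)| = |I(\ell^{k})| = \ell^{k-1}(\ell-1)\cdot\ell^{k} = \ell^{2k-1}(\ell-1)$, and it suffices to compute $|\rho(\mathcal{D}_{k,t,\ell})|$ as a subset of $I(\ell^{k})$.

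First I would fix $n$ with $1 \le n \le k-1$ (the condition $a_\sigma \not\equiv 1 \pmod{\ell^k}$ together with the fact that $\ord_\ell(a_\sigma-1) \ge k$ is impossible for $a_\sigma \in (\Z/\ell^k\Z)^\times$, $a_\sigma \ne 1$, forces $0 \le n \le k-1$; the $n=0$ contribution needs separate handling and I expect it to contribute when $t \ge 1$ only via the $\mathcal{C}$-condition) and count the pairs $(a,b) \in I(\ell^k)$ with $\ord_\ell(a-1) = n$ and $ax+b \notin \rho(\mathcal{C}_{k,n-t+1,\ell})$. The number of $a \in (\Z/\ell^k\Z)^\times$ with $\ord_\ell(a-1) = n$ is $\ell^{k-n} - \ell^{k-n-1}$ for $1 \le n \le k-1$. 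For each such $a$, I need the number of $b \in \Z/\ell^k\Z$ for which $ax+b \in \rho(\mathcal{C}_{k,j,\ell})$ where $j = n-t+1$; the key sublemma is that $ax+b$ fixes some $\ell^{k-j}P_{\ell^k,r}$ iff a certain congruence $b \equiv (1-a)r \pmod{\ell^j}$ (equivalently $b \equiv 0 \pmod{\gcd(\ell^j, \, \text{something})}$) is solvable in $r$, which — because $\ord_\ell(1-a) = n \ge j$ exactly when $t \ge 1$, and otherwise $n < j$ — reduces to counting $b$ in a subgroup of $\Z/\ell^k\Z$ of a controlled index, namely $\ell^{\min(n,j)}$ values of $b$ out of $\ell^k$... lead to a fraction of $\ell^{-\min(n,j)}$. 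So the count of "good" $b$ (those \emph{not} giving a fixed preimage) is $\ell^k - \ell^{k-\min(n,j)}$ per admissible $a$.

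Putting this together, $|\rho(\mathcal{D}_{k,t,\ell})| = \sum_{n} (\ell^{k-n}-\ell^{k-n-1})(\ell^k - \ell^{k-\min(n,\,n-t+1)})$, where the sum runs over the relevant range of $n$. Since $t \ge 1$ gives $n - t + 1 \le n$, we have $\min(n, n-t+1) = \max(n-t+1, 0)$, and I would split the sum at $n = t-1$: for $n \le t-1$ the inner factor is $\ell^k - \ell^k = 0$, so only $n \ge t$ (well, $n \ge t-1$, but $n=t-1$ contributes zero, so effectively $n$ from $t$ to... actually need $\max(n-t+1,0)$, careful with $n = t-1$ giving $0$, and $n \ge t$ giving $n-t+1 \ge 1$) contributes, giving $\sum_{n=t}^{k-1}(\ell^{k-n}-\ell^{k-n-1})(\ell^k - \ell^{k-n+t-1})$ plus possibly a boundary term at $n = t-1$ which vanishes. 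Dividing by $|I(\ell^k)| = \ell^{2k-1}(\ell-1)$ and summing the resulting geometric series in $\ell^{-n}$ over $n = t, \dots, k-1$ should collapse to $\frac{\ell^{2-t} - \ell^{2-k} - \ell^{1-k} + \ell^{1-2k+t}}{\ell^{2}-1}$.

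The main obstacle I anticipate is the sublemma computing $|\{b : ax+b \in \rho(\mathcal{C}_{k,j,\ell})\}|$ precisely, i.e., pinning down exactly when the affine map $ax+b$ fixes an $\ell^j$-level preimage $\ell^{k-j}P_{\ell^k,r}$. This requires translating "$\sigma$ fixes $\ell^{k-j}P_{\ell^k,r}$" through the identification $P_{\ell^k,r} \leftrightarrow P_{\ell^k} + rQ_{\ell^k}$ and the action $\sigma(P_{\ell^k}+rQ_{\ell^k}) = P_{\ell^k} + (a_\sigma r + b_\sigma)Q_{\ell^k}$: the fixed condition becomes $\ell^{k-j}(a r + b)Q_{\ell^k} = \ell^{k-j} r Q_{\ell^k}$, i.e. $a r + b \equiv r \pmod{\ell^j}$, so solvability in $r$ is exactly $b \in (a-1)(\Z/\ell^j\Z)$ lifted to $\Z/\ell^k\Z$, whose size is $\ell^{k-j}\cdot|\,(a-1)(\Z/\ell^j\Z)\,| = \ell^{k-j} \cdot \ell^{j - \ord_\ell(a-1)}$ when $\ord_\ell(a-1) < j$, and all of $\Z/\ell^k\Z \cap \ell^j$-lift when $\ord_\ell(a-1) \ge j$. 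Getting the edge cases $n = 0$ and $n$ near $k$, and the convention that $\mathcal{C}_{k,n',\ell}$ is empty for $n' < 0$, exactly right is where the care is needed; everything else is a geometric-series bookkeeping exercise.
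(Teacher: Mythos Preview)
Your approach is essentially the same as the paper's: identify $\Gal(K_{\ell^k}/\Q)$ with $I(\ell^k)$ via the surjection $\rho$, translate the fixed-preimage condition $\sigma\in\mathcal{C}_{k,n-t+1,\ell}$ into the congruence $(a_\sigma-1)r+b_\sigma\equiv 0\pmod{\ell^{n-t+1}}$ having a solution in $r$ (equivalently $\ell^{n-t+1}\mid b_\sigma$, since $\ord_\ell(a_\sigma-1)=n\ge n-t+1$), stratify by $n$, and sum the resulting geometric series over $n=t,\dots,k-1$. The paper states this in two sentences; your sublemma is exactly their observation that there is no solution $r$ iff $\ell^{n-t+1}\nmid b$, and your summand $(\ell^{k-n}-\ell^{k-n-1})(\ell^k-\ell^{k-n+t-1})$ is algebraically identical to their $\ell^{2k-2n+t-2}(\ell-1)(\ell^{n-t+1}-1)$.

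One small cleanup: your line ``$\min(n,n-t+1)=\max(n-t+1,0)$'' is not literally correct, and your worry about the case $\ord_\ell(a-1)<j$ never arises here because $j=n-t+1\le n$ always. The only boundary issue is $n\le t-1$, where the congruence modulo $\ell^{n-t+1}$ is vacuous (every $\sigma$ fixes an $\ell^{\le 0}$th preimage, namely $\alpha$ itself), so those strata contribute zero to $\mathcal{D}_{k,t,\ell}$ and the sum correctly starts at $n=t$. With that clarified, your outline matches the paper's proof.
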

\begin{proof}
  Noting that $P_{m,r} = P_{m} + r Q_{m}$ we see that $\sigma$
  fixes $\ell^{k-n+t+1} P_{\ell^{k},r}$ if and only if $(a_{\sigma}-1)
  r + b_{\sigma} \equiv 0 \pmod{\ell^{n-t+1}}$. It is easy to see that
  there are no solutions $r$ to this congruence if and only if
  $\ell^{n-t+1} \nmid b$. Thus, $|\mathcal{D}_{k,t,\ell}| = \{ (a,b)
  \in I(\ell^{k}) : \ord_{\ell}(a - 1) = n \text{ and } \ell^{n-t+1}
  \nmid b \}$. We find that there are $\ell^{2k -2n + t - 2} (\ell -
  1) (\ell^{n-t+1} - 1)$ elements of $I(\ell^{k})$ satisfying these
  properties if $n \geq t$, and $0$ if $n < t$.  Summing from $n = t$
  to $k-1$ we get
\[
  |\mathcal{D}_{k,t,\ell}| = \frac{\ell^{2k-t+1} - \ell^{k+1} + \ell^{t} - \ell^{k}}{\ell + 1}
\]
and dividing by $|I(\ell^{k})| = \ell^{2k-1} (\ell - 1)$ gives the desired 
result.
\end{proof}    

We are now able to prove Theorem~\ref{BAconj2} using similar reasoning.

\begin{proof}[Proof of Theorem~\ref{BAconj2}]
  Fix a prime $q \ne 2$ and integers $i$ and $j$ with $i \geq j \geq
  0$. By Lemma~\ref{Zplem}, we have $\ord_{q}(Z(p)) =
  \ord_{q}(|\alpha|)$ for $\alpha = (3/2,1/2) \in G(\F_{p})$. Let $k =
  i+1$ and $p \ne 2, 5, q$ be a prime.  As in the proof of
Lemma~\ref{countprime}, we have $\ord_{q}(p -
  \epsilon_{p}) = i$ and $\ord_{q}(Z(p)) = j$ if and only if when
$\sigma \in \artin{K_{q^{k}}/\Q}{p}$, then
\[
  \ord_{q}(a_{\sigma} - 1) = i \text{ and } \ord_{q}(b_{\sigma})
  = i-j, 
\]
(with the exception that when $j = 0$, there is a $q^{n}$th preimage
of $\alpha$ for all $n$ and so $\ord_{q}(b_{\sigma}) \geq i$).
When $i$ and $j$ are both positive, there are $(q-1)^{2} q^{2k-2i+j-2}$
pairs of $(a_{\sigma},b_{\sigma})$. When $i = 0$, there are $(q-2) q^{2k-2}$
pairs, and when $i \geq 1$ and $j = 0$, there are $(q-1) q^{2k-2i-1}$ pairs.
Applying the Chebotarev density theorem proves the desired result.
\end{proof}

Now, we turn to the composite case. Let $m = \prod_{i=1}^{r}
\ell_{i}^{s_{i}}$.  Take $M = \prod_{i=1}^{r} \ell_{i}^{S_{i}}$ to be
a multiple of $m$. The discussion following the proof of Lemma~\ref{artingrouporder} now
implies the following statement. If $p \not\equiv \pm 1
\pmod{\ell_{i}^{S_{i}}}$ for all $i$, $1 \leq i \leq r$, then $m$
divides the order of $\alpha$ in $G(\F_{p})$ if and only if for all
$\sigma \in \legen{K_{M}/\Q}{p}$,
\[
  \sigma|_{K_{\ell_{i}^{S_{i}}}} \in \mathcal{D}_{S_{i},s_{i},\ell_{i}}
  \text{ for all } i, 1 \leq i \leq r.
\]
In the case that $m$ is coprime to $10$, it follows from Lemma~\ref{surj} that
$\Gal(K_{M}/\Q) \cong \prod_{i=1}^{r} \Gal(K_{\ell_{i}^{S_{i}}}/\Q)$,
and the fraction of elements $\sigma$ that are in
$\mathcal{D}_{S_{i},s_{i},\ell_{i}}$ for all $i$ is
\[
  \prod_{i=1}^{r} \frac{|\mathcal{D}_{S_{i},s_{i},\ell_{i}}|}{|\Gal(K_{M}/\Q)|}.
\]
In the case that $m$ is a multiple of $10$, $\Gal(K_{M}/\Q)$ is an
index 2 subgroup of the direct product and counting is more tricky.
For this reason, we now define for $t_{1} \geq 1$ and $t_{2} \geq 1$
\begin{align*}
  & \mathcal{D}_{k,t_{1},t_{2},10}
  = \left\{ \sigma \in \Gal(K_{10^{k}}/\Q) :
  a_{\sigma} \not\equiv 1 \pmod{2^{k}} \text{
  and } a_{\sigma} \not\equiv 1 \pmod{5^{k}} \text{ and if }\right.\\
  & \left. n_{1} = \ord_{2}(a_{\sigma} - 1), n_{2} = \ord_{5}(a_{\sigma} - 1),
  \text{ then } \sigma \not\in \mathcal{C}_{k,n_{1}-t_{1}+1,2}
  \text{ and } \sigma \not\in \mathcal{C}_{k,n_{2}-t_{2}+1,5} \right\}.
\end{align*}
For $t_{1} = 0$, we omit the condition $\sigma \not\in
\mathcal{C}_{k,n_{1}-t_{1}+1,2}$.

\begin{lem}
\label{count25}
Assume the notation above. For $0 \leq t_{1} < k$
and $1 \leq t_{2} < k$, we have
\begin{align*}
  \frac{|\mathcal{D}_{k,t_{1},t_{2},10}|}{|\Gal(K_{10^{k}}/\Q)|}
  &= \frac{25}{36 \cdot 2^{t_{1}} 5^{t_{2}}}
  - \frac{5}{6 \cdot 2^{t_{1}} 5^{k}} +
  \frac{5}{36 \cdot 2^{t_{1}} 5^{2k-t_{2}}} + \frac{5}{2 \cdot 10^{k}}
  - \frac{5}{12 \cdot 2^{k} 5^{2k-t_{2}}}\\ 
  &- \frac{25}{12 \cdot 5^{t_{2}} 2^{k}}
  + \frac{5}{18 \cdot 2^{2k - t_{1}} 5^{2k-t_{2}}} +
  \frac{25}{18 \cdot 2^{2k-t_{1}} 5^{t^{2}}} - \frac{5}{3 \cdot 2^{2k-t_{1}} 5^{k}}, 
\end{align*}
when $t_{1} > 0$, and
\begin{align*}
  \frac{|\mathcal{D}_{k,0,t_{2},10}|}{|\Gal(K_{10^{k}}/\Q)|}
  &= \frac{25}{9 \cdot 5^{t_{2}}} -
  \frac{25}{9 \cdot 5^{t_{2}} \cdot 4^{k}}
  + \frac{5}{9 \cdot 5^{2k-t_{2}}} - \frac{10}{3 \cdot 5^{k}}
  - \frac{5}{9 \cdot 4^{k} \cdot 5^{2k-t_{2}}}
  + \frac{10}{3 \cdot 20^{k}}.
\end{align*}
\end{lem}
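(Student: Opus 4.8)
The strategy is the same as in the proof of Lemma~\ref{countprime}, but it must now be carried out inside $\Gal(K_{10^{k}}/\Q)$. By Lemma~\ref{surj} (together with the Chinese remainder isomorphism $I(10^{k})\cong I(2^{k})\times I(5^{k})$) this group is the index~$2$ subgroup $H$ of $I(2^{k})\times I(5^{k})$ consisting of the pairs $(a_{1}x+b_{1},a_{2}x+b_{2})$ for which $b_{1}$ is even exactly when $a_{2}\equiv\pm 1\pmod 5$; equivalently, $\sigma$ lies in the identity coset of $H$ (i.e.\ fixes $\sqrt5$) precisely when $b_{1}$ is even and $\legen{a_{2}}{5}=1$. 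So the first step is to record this description and then translate membership in $\mathcal{D}_{k,t_{1},t_{2},10}$ into congruence conditions on the quadruple $(a_{1},b_{1},a_{2},b_{2})$.

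Since the points $P_{2^{k},r}$ and $Q_{2^{k}}$ lie in $K_{2^{k}}\subseteq K_{10^{k}}$, the condition $\sigma\notin\mathcal{C}_{k,n_{1}-t_{1}+1,2}$ depends only on $a_{1},b_{1}$, and exactly as in Lemma~\ref{countprime} it becomes the divisibility condition $2^{n_{1}-t_{1}+1}\nmid b_{1}$, where $n_{1}=\ord_{2}(a_{1}-1)$; likewise $\sigma\notin\mathcal{C}_{k,n_{2}-t_{2}+1,5}$ becomes $5^{n_{2}-t_{2}+1}\nmid b_{2}$ with $n_{2}=\ord_{5}(a_{2}-1)$. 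Together with $a_{1}\not\equiv 1\pmod{2^{k}}$, $a_{2}\not\equiv 1\pmod{5^{k}}$ (and, when $t_{1}=0$, dropping the $\mathcal{C}_{\cdot,\cdot,2}$ condition) this gives $\mathcal{D}_{k,t_{1},t_{2},10}$ explicitly as a subset of $H$. The key observation making the count tractable is that the $\ell=5$ condition forces $\ord_{5}(a_{2}-1)\geq t_{2}\geq 1$, hence $a_{2}\equiv 1\pmod 5$; via the defining relation of $H$ this pins down the parity of $b_{1}$, and so $\mathcal{D}_{k,t_{1},t_{2},10}$ decouples into a condition on $(a_{1},b_{1})$ and a condition on $(a_{2},b_{2})$ — the index~$2$ obstruction only enters through this one linking relation coming from $\Q(\sqrt5)\subseteq K_{2^{k}}\cap K_{5^{k}}$.

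Next I would count by fixing $n_{1}$ and $n_{2}$: there are $2^{k-n_{1}-1}$ residues $a_{1}$ with $\ord_{2}(a_{1}-1)=n_{1}$ and $4\cdot 5^{k-n_{2}-1}$ residues $a_{2}$ with $\ord_{5}(a_{2}-1)=n_{2}$; for each such pair one counts the admissible $b_{1}$ (even and not divisible by $2^{n_{1}-t_{1}+1}$) and the admissible $b_{2}$ (not divisible by $5^{n_{2}-t_{2}+1}$), and sums the product over the allowed ranges of $n_{1}$ and $n_{2}$. This is a double geometric series in $2^{-n_{1}}$ and $5^{-n_{2}}$, evaluated exactly as in Lemma~\ref{countprime}; the extra parity restriction on $b_{1}$ (which, for $t_{1}\geq1$, kills the $n_{1}=t_{1}$ term and otherwise halves the $b_{1}$-count) is what introduces the additional powers of $2$ that distinguish these formulas from the naive product of the two single-prime densities, and the case $t_{1}=0$ is shorter because the $\ell=2$ preimage condition is absent.

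The final step is purely algebraic: divide by $|\Gal(K_{10^{k}}/\Q)|=\tfrac12|I(2^{k})|\,|I(5^{k})|$ and simplify into the stated closed form. The main obstacle is precisely this bookkeeping — the sum that emerges is a combination of nine (respectively six) rational functions of $2^{k}$ and $5^{k}$, and reducing it is lengthy and error-prone. I would organize the computation by evaluating the $n_{1}$-sum and the $n_{2}$-sum separately, recording each as a short rational expression, forming the product, and only then clearing denominators and collecting like terms; checking a small case such as $k=3$ against the claimed expression is a useful sanity check along the way.
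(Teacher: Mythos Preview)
Your proposal is correct and follows essentially the same approach as the paper: translate membership in $\mathcal{D}_{k,t_{1},t_{2},10}$ into congruence conditions on $(a,b)$ inside the index-$2$ image described in Lemma~\ref{surj}, observe that $t_{2}\geq 1$ forces $a\equiv 1\pmod 5$ and hence $b$ even (so the $2$- and $5$-parts decouple), then count by fixing $(n_{1},n_{2})$ and summing a double geometric series. The only cosmetic difference is that you phrase the count via the Chinese remainder coordinates $(a_{1},b_{1},a_{2},b_{2})\in I(2^{k})\times I(5^{k})$ while the paper works directly with $ax+b\in I(10^{k})$; these are literally the same computation.
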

\begin{proof}
Arguing as in Lemma~\ref{countprime}, $|\mathcal{D}_{k,t_{1},t_{2},10}|$ is
the number of $ax+b \in I(10^{k})$ that satisfy
\begin{align*}
  & b \text{ even } \iff a \equiv 1, 4 \pmod{5},\\
  & a \not\equiv 1 \pmod{2^{k}}, a \not\equiv 1 \pmod{5^{k}}\\
  & \ord_{2}(a-1) = n_{1}, \ord_{5}(a - 1) = n_{2},\\ 
  & 2^{n_{1} - t_{1} + 1} \nmid b, \text{ and } 5^{n_{2} - t_{2} + 1} \nmid b,
\end{align*}
provided $t_{1} > 0$. In the case that $b$ is odd, $a \equiv 2, 3
\pmod{5}$ and $t_{2} = 0$. In the case that $b$ is even and $a \equiv
1, 4 \pmod{5}$ there are $2^{k-n_{1} - 1} \cdot 4 \cdot 5^{k - n_{2} -
  1}$ choices for $a$ and $(2^{k-1} - 2^{k - n_{1} + t_{1} - 1})
(5^{k} - 5^{k - n_{1} + t_{1} - 1})$ choices for $b$. Summing over
$n_{1}$ and $n_{2}$ gives
\[
  \sum_{n_{1} = t_{1}}^{k-1} \sum_{n_{2} = t_{2}}^{k-1}
  4 \cdot 2^{k - n_{1} - 1} \cdot 5^{k - n_{2} - 1}
  \left(2^{k-1} - 2^{k - n_{1} + t_{1} - 1}\right)
  \left(5^{k} - 5^{k - n_{2} + t_{2} - 1}\right).
\]
This sum of four geometric series is easily evaluated to give the
stated answer. In the case that $t_{1} = 0$, there are again
$2^{k-n_{1}-1} \cdot 4 \cdot 5^{k-n_{2}-1}$ choices for $a$,
and $2^{k-n_{1}} \cdot (5^{k} - 5^{k-n_{2}+t_{2} - 1})$ choices for $b$. Summing
yields the stated result.
\end{proof}

We are now ready to prove the main result.
\begin{proof}[Proof of Theorem~\ref{main}]
  Let $m$ be a positive integer with $\gcd(m,10) = 1$ and fix an
  $\epsilon > 0$.  Let $m = \prod_{i=1}^{r} \ell_{i}^{s_{i}}$ be the
  prime factorization of $m$ and note that $\lim_{k \to \infty}
  \frac{|\mathcal{D}_{k,t,\ell}|}{|\Gal(K_{\ell^{k}}/\Q)|} =
  \zeta(\ell^{t})$ by Lemma~\ref{countprime}. Choose a positive real number $\eta$ small enough
  so that $1 - \frac{\epsilon}{2} \leq (1 - \eta)^{r}$ and $(1 +
  \eta)^{r} \leq 1 + \frac{\epsilon}{2}$. Now, let $M =
  \prod_{i=1}^{r} \ell_{i}^{S_{i}}$ be chosen with $S_{i}$ is
  sufficiently large that
\[
  \zeta(\ell^{s_{i}}) - \eta \leq \frac{|\mathcal{D}_{S_{i},s_{i},\ell_{i}}|}{|\Gal(K_{\ell_{i}^{S_{i}}}/\Q)|} \leq \zeta(\ell^{s_{i}}) - \frac{2}{(\ell_{i} - 1) \ell_{i}^{S_{i}}} + \eta
\]
for $1 \leq i \leq r$. Combining the Chebotarev density theorem with
the observation that $\ell_{i}^{s_{i}}$ divides the order of $\alpha \in
G(\F_{p})$ if and only if $\artin{K_{\ell^{S_{i}}}/\Q}{p} \subseteq \mathcal{D}_{S_{i},s_{i},\ell_{i}}$ (provided $p \not\equiv \pm 1 \pmod{\ell_{i}^{S_{i}}}$) we obtain
that the number of primes $p \leq x$ for which $m$ divides $|\alpha|$ satisfies
\[
  -\epsilon/2 + \prod_{i=1}^{r} \frac{|\mathcal{D}_{S_{i},s_{i},\ell_{i}}|}{\Gal(K_{\ell_{i}^{S_{i}}}/\Q)} \leq \frac{\# \{ p \leq x : m | |\alpha| \}}{\pi(x)}
  \leq \prod_{i=1}^{r} \frac{|\mathcal{D}_{S_{i},s_{i},\ell_{i}}|}{\Gal(K_{\ell_{i}^{S_{i}}}/\Q)}
  + \frac{2}{(\ell_{i} - 1) \ell_{i}^{S_{i} - 1}} + \epsilon/2
\]
provided $x$ is sufficiently large. We have that
\[
   \zeta(m) (1 - \epsilon/2) \leq \prod_{i=1}^{r} \left(\zeta(\ell_{i}^{s_{i}}) - \eta\right) \leq d \leq \prod_{i=1}^{r} \left(\zeta(\ell_{i}^{s_{i}}) + \eta\right) \leq
  \zeta(m) (1 + \epsilon/2).
\]
which implies that 
\[
  (\zeta(m) - \epsilon) \pi(x) \leq \# \{ p \leq x : m | |\alpha| \} \leq (\zeta(m) + \epsilon) \pi(x),
\]
provided $x$ is large enough. Combining this with
Lemma~\ref{Zplem} proves Theorem~\ref{main} in the case that
$\gcd(m,10) = 1$. The case that $\gcd(m,10) > 1$ is similar with two
notable differences: there is extra complexity in dealing with the
primes $2$ and $5$ using Lemma~\ref{count25}, and Lemma~\ref{Zplem}
shows that $2 | Z(p)$ if and only if $|\alpha|$ is odd.
\end{proof}

\bibliographystyle{amsplain}
\bibliography{refs}

\end{document}